\newcommand{\menge}[2]{\big\{{#1} \mid {#2}\big\}}
\newcommand{\emp}{\ensuremath{{\varnothing}}}
\newcommand{\scal}[2]{\left\langle{#1}\mid {#2} \right\rangle}
\newcommand{\HH}{\ensuremath{\mathcal H}}
\newcommand{\SG}{\ensuremath{{\mathfrak B}}}
\newcommand{\GG}{\ensuremath{\mathcal G}}
\newcommand{\E}{\ensuremath{\mathbb{E}}}
\newcommand{\RR}{\ensuremath{\mathbb R}}
\newcommand{\RP}{\ensuremath{\left[0,+\infty\right[}}
\newcommand{\RPP}{\ensuremath{\,\left]0,+\infty\right[}}
\newcommand{\NN}{\ensuremath{\mathbb N}}
\newcommand{\dom}{\ensuremath{\operatorname{dom}}}
\newcommand{\prox}{\ensuremath{\operatorname{prox}}}
\newcommand{\argmin}{\ensuremath{\operatorname{argmin}}}
\newcommand{\zer}{\ensuremath{\operatorname{zer}}}
\newcommand{\gra}{\ensuremath{\operatorname{gra}}}
\newcommand{\PP}{\ensuremath{\boldsymbol{P}}}
\newcommand{\Id}{\ensuremath{\operatorname{I}}}
\newcommand{\weakly}{\ensuremath{\rightharpoonup}}
\newcommand{\pinf}{\ensuremath{+\infty}}
\newtheorem{assumption}[theorem]{Assumption}
\newtheorem{algorithm}[theorem]{Algorithm}
\newtheorem{example}[theorem]{Example}
\newtheorem{problem}[theorem]{Problem}
\newtheorem{remark}[theorem]{Remark}
\numberwithin{equation}{section}
\newcommand{\obj}{\Phi}
\newcommand{\smo}{L}
\newcommand{\nsm}{G}
\title{Stochastic forward-backward splitting for monotone inclusions
in Hilbert spaces}
 \author{Lorenzo Rosasco  $^\dag$\thanks{
       DIBRIS, Universit\`a di Genova,
       Via Dodecaneso, 35,
       16146, Genova, Italy, ({\tt lrosasco@mit.edu})}   
 \and
Silvia Villa $^\dag$ \and B$\grave{\text{\u{a}}}$ng C\^ong V\~u  
\thanks{
 LCSL, Istituto Italiano di Tecnologia
       and Massachusetts Institute of Technology,
       Bldg. 46-5155, 77 Massachusetts Avenue, Cambridge, MA 02139, USA, ({\tt Silvia.Villa@iit.it, Cong.Bang@iit.it})} 
}
\begin{document}
\maketitle
\frenchspacing

\begin{abstract}
We propose and analyze the convergence of a novel stochastic forward-backward splitting algorithm 
 for solving monotone inclusions given by the sum of a maximal monotone
operator and a single-valued maximal monotone cocoercive operator. This latter framework
has a number of interesting special cases, including variational inequalities and convex minimization
problems,   while  stochastic approaches are practically relevant to account for perturbations in the data.     
The algorithm we propose  is a stochastic extension of  the classical deterministic forward-backward method, and is obtained  considering  the composition of the resolvent of the maximal monotone operator with  
a forward step based on a stochastic estimate of the single-valued operator. 
Our study  provides a non-asymptotic error analysis  in expectation for the strongly 
monotone case, as well as  almost sure convergence   under weaker assumptions. 
The approach we consider allows to avoid averaging, a feature critical when considering methods based on sparsity, and,  for  minimization problems, it allows  to obtain convergence rates  matching  those obtained by stochastic extensions of so called accelerated methods.
Stochastic quasi Fejer's sequences are  a key technical  tool  to prove almost sure convergence. 
\end{abstract}

{\small {\bf Keywords:} stochastic first order methods \and forward-backward splitting algorithm \and monotone inclusions  \and stochastic Fej\'er sequences}\\
{\small{\bf Mathematics Subject Classification (2000)} 47H05 \and 90C15 \and 65K10 \and 90C25}

\title{Stochastic forward-backward splitting for monotone inclusions
}


\maketitle

\section{Introduction}
Maximal monotone operators have been extensively studied
since \cite{Min62}, largely because they have wide applications in pure and applied sciences \cite{Bre73,PasSbu78},  
and because they provide a convenient framework for a unified treatment of equilibrium problems, 
variational inequalities, and convex optimization, see \cite{livre1,Bre73,Zei90} and references therein. 
More precisely, let $\HH$ be a real Hilbert space and let $T\colon \HH\to 2^{\HH}$ be  a set 
valued maximal monotone operator \cite{livre1},  a key problem is to find an element
$\overline{w}\in\HH$  such that $0\in T(\overline{w})$.  In this paper, we consider the case where $T$ is the sum of two maximal 
monotone operators  denoted by $A$ and $B$, with $B$ single valued and cocoercive. The problem  is
then  to find  $\overline{w} \in\HH$ such that 
\begin{equation}
\label{e:inc}
 0\in A\overline{w} + B\overline{w},
\end{equation}  
under the assumption that such a $\overline{w}$ exists.
Problem   \eqref{e:inc} includes for example  fixed point
problems and variational
inequalities, which can be recovered choosing $A$ equal to the normal cone of a nonempty closed and convex set, 
but also composite minimization problems, where $A$ is the subdifferential of a proper 
lower semicontinuous convex functions and $B$ is the gradient of a smooth convex function \cite{Roc70,Roc70a}.
Indeed, there is a vast literature on  algorithmic schemes for  solving \eqref{e:inc}, and in particular on approaches 
separating the contribution of $A$ and $B$.  Well-known among such approaches is the forward-backward splitting algorithm 
\cite{livre1,Com04},
\begin{equation}\label{eq:bfba}
(\forall n\in\mathbb{N}) \quad w_{n+1}= J_{\gamma_n A}(w_n-\gamma_n B w_n),
\end{equation}  
where  $\gamma_n\in\left]0,+\infty\right[$ and $J_{\gamma_n A}=(I+\gamma_n A)^{-1}$ is the resolvent operator of $A$.  
Since the seminal works in \cite{LioMer79,Pas79}, forward-backward splitting methods have been considerably developed to be more flexible, 
to achieve better convergence properties, and to allow for numerical errors, 
see \cite{livre1,beck09,ComVu13,siam05,nesterov07,VilSal13} and references therein. 

In this paper, we are interested in the practically relevant situation in which
 $B$ is known only through measurements subject to non vanishing random noise, or
 when the  computation of a stochastic estimate of $B$ is
 cheaper than the evaluation of the operator itself.   While there is a rich literature on stochastic proximal gradient splitting algorithms for convex minimization problems 
(see e.g. \cite{Duchi09,AtcForMou14}), and various results 
for variational inequalities are available \cite{Jud11,JiaXu08,KosNedSha13}, we are not aware of previous study of stochastic splitting algorithms  for solving monotone inclusions.
The results in this paper fill this gap  by proposing a stochastic forward backward splitting method 
 for monotone inclusions and proving: 1)  a non-asymptotic  error analysis  in expectation, and 2) strong almost sure convergence  of the iterates. More specifically, under strong monotonicity assumptions,  we provide non asymptotic bounds for convergence in 
norm and in expectation, leveraging on a non asymptotic version of Chung's lemma \cite[Chapter 2, Lemma 5]{Pol87}. 
Almost sure convergence is obtained under the weaker assumption of uniform monotonicity of $B$ using   the concept of stochastic quasi-Fej\`er sequences \cite{ermol1969method,Ermol68},  
which allows to originally combine deterministic and stochastic techniques. 

A few  features of our approach are worth mentioning. First,  from the modeling point of view, we
consider weak assumptions on the stochastic estimate of $B$ with respect to the ones 
usually required in the stochastic optimization literature, see e.g. \cite{Nem09}. 
In particular, the assumption on the stochastic estimate of $B$ is different from the
one in \cite{ComPes14}, which assume a summability condition on the errors  (on the other hand, 
the fact that in our case the errors do not go to zero does not allow to consider a non vanishing step-size, 
as it is done in \cite{ComPes14}).
Second, our approach allows to  avoid averaging  the iterates. This aspect becomes crucial in situations where 
the problem structure  is meant to enforce  sparsity of the solution and an averaging process can be 
detrimental, see e.g.  \cite{LinChePen14}. 

 
The paper is organized as follows. 
Section \ref{sec:pre} collects some preliminaries proved in the Appendix. In Section \ref{sec:main}
we establish the main results of the paper: almost sure convergence of the iterates and 
a non-asymptotic analysis of stochastic forward-backward splitting algorithm,
where the bounds depend explicitly on the  parameters of the problem. 
Section \ref{sec:sc} focuses on special cases:
variational inequalities, minimization problems, and minimization problems over orthonormal bases. 
For the case of variational inequalities, 
we obtain an additional convergence result    
without imposing stronger monotonicity properties on $B$, which requires averaging of the 
iterates.

\noindent{\bf Notation.} Throughout, $(\boldsymbol{\Omega, \mathcal{A},\mathsf{P}})$ is a probability space,
$\NN^* = \NN\backslash\{0\}$,
 $\HH$  is a real Hilbert space, and   $2^{\HH}$ is its power set.
We denote by  $\scal{\cdot}{\cdot}$ and $\|\cdot\|$  the scalar product and the associated norm
of $\HH$.  
The symbols $\weakly$ and $\to$ denote weak and strong convergence, respectively. We denote  
by $\ell_+^1(\NN)$ the set of summable sequences in  $\RP$.
The class of all lower semicontinuous convex functions 
$G\colon\HH\to\left]-\infty,+\infty\right]$ such 
that $\dom G=\menge{x\in\HH}{G(x) < +\infty}\neq\emp$ 
is denoted by $\Gamma_0(\HH)$.
 We denote by 
$\sigma(X)$ the  $\sigma$-field generated by a random variable $X\colon \boldsymbol{\Omega}\to\HH$, 
where $\HH$ is endowed with the Borel $\sigma$-algebra. The 
expectation of a random variable $X$ is denoted by $\E[X]$. The conditional expectation
of $X$ given a sub sigma algebra $\mathcal{F}\subset \boldsymbol{\mathcal{A}}$ is denoted by 
$\E[X|\mathcal{F}]$. The conditional expectation of $X$ given $Y$ is denoted by $\E[X|Y]$.
A sequence $(\mathcal{F}_n)_{n\in\mathbb{N}}$ of sub sigma algebras of $\boldsymbol{\mathcal{A}}$
such that, for every $n\in\NN$, $\mathcal{F}_n\subseteq \mathcal{F}_{n+1}$ is called a filtration.
Let, for every  $n\in\mathbb{N}$,  $X_n\colon\boldsymbol{\Omega}\to\HH$ be an integrable  random variable
with $\E[\|X_n\|]<+\infty$. The sequence $(X_n)_{n\in\mathbb{N}}$ is called a random process.


\section{Preliminaries}\label{sec:pre}
Before discussing our main contributions, we recall  basic concepts and results we need in the following.

Let $A\colon\HH\to 2^{\HH}$ be a set-valued operator.
The domain and the graph of $A$ are respectively defined by 
$\dom A=\menge{w\in\HH}{Aw\neq\emp}$ and 
$\gra A=\menge{(w,u) \in \HH\times\HH}{u\in Aw}$.
We denote by $\zer A=\menge{w\in\HH}{0\in Aw}$ the set of zeros 
of $A$. The inverse of $A$ is
$A^{-1}\colon\HH\mapsto 2^{\HH}\colon u\mapsto 
\menge{w\in\HH}{u\in Aw}$. 
The following notion is central in the paper.
\begin{definition}  Let $A\colon\HH\to 2^{\HH}$ be a set-valued operator. $A$ is monotone if 
\begin{equation}\label{eq:mon}
\big(\forall (w,u)\in\gra A\big)\big(\forall (y,v)\in \gra A\big) \quad \scal{w-y}{u-v} \geq 0,
\end{equation}
and maximally monotone if it is monotone and there exists no 
monotone operator  $B\colon\HH\to2^\HH$ such that $\gra B$ properly contains $\gra A$. 
\end{definition}

Let $A$ be a monotone operator and let $y\in\dom A$. 
The following concepts  are pointwise 
variants of the well-known concepts of uniform and strong monotonicity. 

We say that $A$ is uniformly monotone 
at $y$ if there exists an 
increasing function $\phi\colon\left[0,+\infty\right[\to 
\left[0,+\infty\right]$ vanishing only at $0$ such that 
\begin{equation}\label{oioi}
\big(\forall (w,u)\in\gra A\big)\big(\forall v\in  Ay\big)
\quad\scal{w-y}{u-v}\geq\phi(\|w-y\|).
\end{equation}
In the case when $\phi = \mu|\cdot|^2$, for some $\mu\in\left]0,+\infty\right[$, we say that 
$A$ is $\mu$-strongly monotone at $y$. If $A-\mu\Id$ is  monotone, for some $\mu\in\left]0,+\infty\right[$, 
we say that $A$ is $\mu$-strongly monotone. 
We say that $A$ is strictly monotone at $y\in\dom A$ if 
\begin{equation}
\big(\forall (w,u)\in\gra A\big)\big(\forall v\in Ay\big)\quad w\neq y \Rightarrow
\quad\scal{w-y}{u-v} > 0.
\end{equation}
Let $\beta\in \left]0,+\infty\right[$. A single-valued operator 
$B\colon \HH\to\HH$ is $\beta$-cocoercive  if
\begin{equation}
 (\forall w\in\HH)(\forall y\in\HH)\quad 
\scal{w-y}{Bw-By} \geq \beta \|Bw-By\|^2.
\end{equation}
The resolvent of any maximally monotone operator $A$ is 
\begin{equation}
 J_A=(\Id + A)^{-1}. 
\end{equation}
 We recall that $J_A$  is well defined and single valued \cite{Min62}, and can therefore be identified
 with an operator $J_A\colon\HH\to\HH$.
When $A=\partial G$ for some $\nsm \in \Gamma_0(\HH)$, then $J_A$
coincides with the proximity operator of $\nsm$ \cite{Mor62}, which is defined as
\begin{equation}
\label{e:prox}
\prox_\nsm\colon\HH\to\HH\colon w
\mapsto\underset{v\in\HH}{\argmin}\: \nsm(v) + \frac12\|w-v\|^2.
\end{equation}

We next recall the concept of stochastic quasi  Fej\'er sequence, which was introduced 
and studied in the papers \cite{Erm71,ermol1969method,Ermol68}.
This concept provides a unified approach to prove convergence of several algorithms in 
convex optimization (see \cite{livre1} and references therein).

\begin{definition}{\rm\cite{Ermol68}}
 Let $S$ be a non-empty subset of $\HH$.
 A random process $(w_n)_{n\in\NN^*}$ in $\HH$ is stochastic
 quasi-Fej\'er monotone with respect to the set  $S$ if $\E[\|w_1\|^2] < +\infty$ and  there exists $(\varepsilon_n)_{n\in\NN^*}\in\ell_{+}^1(\NN^*)$ such that 
\begin{equation}\label{e:fejer}
 (\forall w\in S)(\forall n\in\NN^*)\quad \E[\|w_{n+1}-w\|^2|\sigma(w_1,\ldots, w_n)] \leq \|w_n-w\|^2 + \varepsilon_n \quad \text{a.s.}
\end{equation}
\end{definition}

\section{Main results} \label{sec:main}
The following is the main problem studied in the paper. 

\begin{problem}
\label{inc0}
Let $A\colon \HH\to 2^{\HH}$ be a maximally monotone operator, let $\beta\in\,]0,+\infty[$ and let $B\colon\HH\to\HH$ 
be a $\beta$-cocoercive operator. Assume that $ \zer (A+B)\neq \varnothing$.
The goal is to  find $\overline{w} \in\HH$ such that 
\begin{equation}
\label{e:inc0}
 0\in A\overline{w} + B\overline{w} .
\end{equation}  
\end{problem}
 
\subsection{Algorithm}

We propose the following stochastic forward-backward splitting algorithm 
for solving Problem \ref{inc0}. The key difference with respect to the classical setting is that we assume
to have access only to a stochastic estimate of $B$.  
\begin{algorithm} 
\label{a:maininc}
Let $(\gamma_n)_{n\in\NN^*}$ be a  sequence in $]0,+\infty[$,
let $(\lambda_n)_{n\in\NN^*}$ be a sequence in $\left[0,1\right]$,
and let $(\SG_n)_{n\in\NN^{*}}$ be a $\HH$-valued random process such that 
$(\forall n\in\NN^*)\; \E[\| \SG_n\|^2]  < +\infty$. 
Let $w_1\colon\Omega \to \HH$ be a random variable such that $\E[\|w_1\|^2]<+\infty$
and set
\begin{equation}
\label{e:main1*}
(\forall n\in \NN^{*})\quad
\begin{array}{l}
\left\lfloor
\begin{array}{l}
z_n = w_n- \gamma_n \SG_n\\
y_{n} = J_{\gamma_n A}z_n\\
w_{n+1} = (1-\lambda_n) w_{n} + \lambda_ny_{n}.\\
\end{array} 
\right.\\[2mm]
\end{array}
\end{equation}
\end{algorithm}
\vspace{0.8cm}

We will consider the following conditions for the filtration $(\mathcal{F}_n)_{n\in\NN^*}$, $\mathcal{F}_n= \sigma(w_1,\ldots,w_n)$.
\begin{enumerate}
\item[(A1)] 
 For every $n\in\NN^*$, $\E[\SG_n|\mathcal{F}_n] = Bw_n$.
\item[(A2)] 
There exist  $(\alpha_n)_{n\in\NN^*}$ in $\left]0,+\infty\right[$ and $\sigma\in\RPP$ such that,
 for every $n\in\NN^*$,  $\E[\|\SG_n - Bw_n\|^2|\mathcal{F}_n] 
\leq \sigma^2(1+ \alpha_n\|Bw_n\|^2 )$.
\item[(A3)] 
There exists $\varepsilon\in\,]0,+\infty[ $ such that $(\forall n\in\NN^*)$
$\gamma_n\leq (2-\epsilon)\beta/(1+2\sigma^2\alpha_n) $.
\item[(A4)] Let $\overline{w}$ be a solution of Problem \ref{inc0} and let 
\[
(\forall n\in\mathbb{N}^*)\quad \chi^{2}_n =
\lambda_n\gamma_{n}^2
\big(1 +2\alpha_{n}\|B\overline{w}\|^2\big).
\] Then
the following hold:
\begin{equation}
\label{e:suma}
 \sum_{n\in\NN^{*}}\lambda_n\gamma_n = +\infty
\quad \text{and}\quad 
 \sum_{n\in\NN^{*}}\chi^{2}_n < +\infty.
\end{equation}
\end{enumerate}
\begin{remark}\ 
\begin{enumerate}
\item
If, for every $n\in\NN^*$,  $\SG_n = Bw_n$, Algorithm \ref{a:maininc} reduces to the well known 
forward--backward splitting in \cite[Section 6]{Siop1}. 
However, under Assumptions (A1)-(A2)-(A3)-(A4),   weak convergence 
of $(w_n)_{n\in\NN^*}$ is not guaranteed since the conditions $\sum_{n\in\NN^*}\lambda_n\gamma_{n}^2<+\infty$ and 
$\sum_{n\in\NN^*} \lambda_n\gamma_n=+\infty$ imply $\inf\gamma_n=0$, while
to apply the classic theory we need $\inf \gamma_n$ to be strictly greater
than 0.  Assuming additionally that $(\forall n\in\mathbb{N}^*)\; \lambda_n=1$, under our assumptions
only ergodic convergence of $(w_n)_{n\in\mathbb{N}^*}$ has been proved in the deterministic case 
in \cite{Pas79}.
\item
A stochastic forward-backward splitting algorithm for monotone inclusions
has been recently analyzed in \cite{ComPes14}, under rather different assumptions. Indeed, 
they consider a fixed stepsize and a summability condition on $\E[\|\SG_n - Bw_n\|^2|\mathcal{F}_n]$.
In the case $A = \partial\nsm$ and $B = \nabla\smo$, for some $\nsm$ and $\smo \in \Gamma_0(\HH)$ such that $\smo$ is differentiable 
with $\beta^{-1}$-Lipschitz continuous gradient, Algorithm \ref{a:maininc} reduces to the stochastic
proximal forward-backward splitting which  is a variant of the algorithm in 
\cite{Duchi09}, also  studied in \cite{AtcForMou14}. 
\item Condition (A2) can be seen as a relative error criterion, and has been considered in \cite{Barty07} for the case of constrained minimization problems on infinite dimensional spaces. This is a more general condition than the one usually assumed in the context of stochastic optimization, where $\alpha_n=0$. 
\item
If $A=0$, then $B\overline{w}=0$ for every solution of Problem \ref{inc0}. In this case, $\chi_n=\lambda_n \gamma_n^2$ in \eqref{e:suma} and condition $(A4)$ becomes $\sum_{n\in\NN^*} \lambda_n\gamma_n=+\infty$ and $\sum_{n\in\NN^*} \lambda_n\gamma_n^2<+\infty$. The latter are the usual conditions required on the stepsize in the study of stochastic gradient descent algorithms  (see e.g. \cite{Bert00}). 
These conditions guarantee a sufficient, but not too fast, decrease of the stepsize length. Moreover, in this case,  $(\alpha_n)_{\in\NN^*}$ is not necessarily bounded, therefore condition (A2) is a very weak requirement. 
\end{enumerate}
\end{remark}
\begin{example}\label{ex:afm} Let $(\GG,\mathcal{B},P)$ be a probability space, let 
$b\colon\HH\times\GG\to \HH$ be a measurable function such that $\int_{\GG}\|b(w,y)\|P(dy)<+\infty$,
and suppose that $B$ satisfies
\begin{equation}
(\forall w\in\HH)\quad Bw = \int_{\GG}b(w,y)P(dy).
\end{equation}
Then there are several ways to find a stochastic estimate of $Bw$. In particular, if an independent
and identically distributed sequence $(y_n)_{n\in\NN^*}$ of realizations of the random vector $y$
 is available, then   one can take $\SG_n=b(w_n,y_n)$.
If in addition $B$ is a gradient operator and $\GG$ is finite dimensional, we are in the classical 
setting of stochastic optimization \cite{Nem09}.
\end{example}

\subsection{Almost sure convergence}
In this section we describe our main results about almost sure convergence of the iterates 
of Algorithm \ref{a:maininc}. All the proofs are postponed to Section \ref{sec:pf}.
We first collect basic properties satisfied by
 the sequences in Algorithm  \ref{a:maininc}.
\begin{proposition}
\label{p:1}
Suppose that (A1), (A2), (A3), and (A4) are satisfied.
Let $(w_n)_{n\in\NN^{*}}$ be the sequence generated by Algorithm \ref{a:maininc} and let $\overline{w}$ be a solution
of Problem \ref{inc0}.
Then the following hold:
\begin{enumerate}
\item \label{p:1i}
The sequence $(\E[\| w_n-\overline{w}\|^{2}])_{n\in\NN^{*}}$ converges to a finite value.
\item \label{p:1iii}
$\sum_{n\in\NN^{*}}\lambda_n\gamma_n \E[\scal{w_n-\overline{w}}{Bw_n - B\overline{w}}] < \pinf$.
Consequently, 
\begin{equation*}
\varliminf_{n\to\infty}\E[\scal{w_n-\overline{w}}{Bw_n-B\overline{w}}] = 0
\quad \text{and} \quad
 \varliminf_{n\to\infty}\E[\|Bw_n-B\overline{w}\|^2] = 0.  
\end{equation*}
\item
\label{p:1ii}  
$\displaystyle\sum_{n\in\NN^{*}}
\! \lambda_n\E[\|w_n\!-y_n-\gamma_n(\SG_n-B\overline{w})\|^2]\! <\! \pinf $
and 
$\displaystyle\sum_{n\in\NN^{*}} \!\lambda_n \E[\|w_n-y_n \|^2]\! <\! +\infty$.
\end{enumerate}
\end{proposition}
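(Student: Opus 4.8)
The plan is to reduce all three assertions to a single stochastic quasi-Fej\'er recursion of the form
\[
\E[\|w_{n+1}-\overline{w}\|^2\mid\mathcal{F}_n]\leq \|w_n-\overline{w}\|^2-\epsilon\lambda_n\gamma_n\scal{w_n-\overline{w}}{Bw_n-B\overline{w}}+\sigma^2\chi_n^2-R_n,
\]
where $R_n=\lambda_n\E[\|w_n-y_n-\gamma_n(\SG_n-B\overline{w})\|^2\mid\mathcal{F}_n]+\lambda_n(1-\lambda_n)\E[\|w_n-y_n\|^2\mid\mathcal{F}_n]\geq 0$. To derive it I would first establish a purely deterministic one-step estimate. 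Since $\overline{w}\in\zer(A+B)$ we have $-B\overline{w}\in A\overline{w}$, while the definition of the resolvent gives $\gamma_n^{-1}(z_n-y_n)\in Ay_n$ with $z_n=w_n-\gamma_n\SG_n$. Writing $r_n=(w_n-y_n)-\gamma_n(\SG_n-B\overline{w})$ and $\overline{z}=\overline{w}-\gamma_nB\overline{w}$, monotonicity of $A$ applied to these two pairs yields $\scal{y_n-\overline{w}}{r_n}\geq 0$; expanding the identity $z_n-\overline{z}=(y_n-\overline{w})+r_n$ then gives
\[
\|y_n-\overline{w}\|^2\leq\|(w_n-\overline{w})-\gamma_n(\SG_n-B\overline{w})\|^2-\|r_n\|^2.
\]
Combining this with the convex-combination identity $\|w_{n+1}-\overline{w}\|^2=(1-\lambda_n)\|w_n-\overline{w}\|^2+\lambda_n\|y_n-\overline{w}\|^2-\lambda_n(1-\lambda_n)\|w_n-y_n\|^2$ and expanding the remaining quadratic produces a deterministic bound on $\|w_{n+1}-\overline{w}\|^2$ featuring the cross term $-2\lambda_n\gamma_n\scal{w_n-\overline{w}}{\SG_n-B\overline{w}}$, the term $\lambda_n\gamma_n^2\|\SG_n-B\overline{w}\|^2$, and the nonpositive residual $-R_n$.

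Next I would take the conditional expectation given $\mathcal{F}_n$, after checking by induction that $\E[\|w_n\|^2]<\infty$ so that all quantities are integrable. Assumption (A1) turns the cross term into $-2\lambda_n\gamma_n\scal{w_n-\overline{w}}{Bw_n-B\overline{w}}$, and (A1)--(A2), together with the splitting $\|\SG_n-B\overline{w}\|^2=\|\SG_n-Bw_n\|^2+2\scal{\SG_n-Bw_n}{Bw_n-B\overline{w}}+\|Bw_n-B\overline{w}\|^2$ and $\|Bw_n\|^2\leq 2\|Bw_n-B\overline{w}\|^2+2\|B\overline{w}\|^2$, give $\E[\|\SG_n-B\overline{w}\|^2\mid\mathcal{F}_n]\leq \sigma^2(1+2\alpha_n\|B\overline{w}\|^2)+(1+2\sigma^2\alpha_n)\|Bw_n-B\overline{w}\|^2$. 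The decisive step is then to absorb the positive multiple of $\|Bw_n-B\overline{w}\|^2$ into the cross term: by $\beta$-cocoercivity, $\|Bw_n-B\overline{w}\|^2\leq\beta^{-1}\scal{w_n-\overline{w}}{Bw_n-B\overline{w}}$, and by (A3), $\gamma_n(1+2\sigma^2\alpha_n)\leq(2-\epsilon)\beta$, so the combination $-2\lambda_n\gamma_n\scal{\cdot}{\cdot}+\lambda_n\gamma_n^2(1+2\sigma^2\alpha_n)\|Bw_n-B\overline{w}\|^2$ is at most $-\epsilon\lambda_n\gamma_n\scal{w_n-\overline{w}}{Bw_n-B\overline{w}}$. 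Recognizing $\sigma^2\lambda_n\gamma_n^2(1+2\alpha_n\|B\overline{w}\|^2)=\sigma^2\chi_n^2$ yields the master recursion.

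All three claims then follow. Discarding the nonpositive terms gives $\E[\|w_{n+1}-\overline{w}\|^2\mid\mathcal{F}_n]\leq\|w_n-\overline{w}\|^2+\sigma^2\chi_n^2$; taking full expectation and using $\sum_n\chi_n^2<\infty$ (A4), the sequence $\E[\|w_n-\overline{w}\|^2]+\sigma^2\sum_{k\geq n}\chi_k^2$ is nonincreasing and bounded below, which proves (i). For (ii), rearranging the recursion, taking full expectation, and summing telescopes $\E[\|w_n-\overline{w}\|^2]$ and controls $\sum_n\chi_n^2$, so $\sum_n\lambda_n\gamma_n\E[\scal{w_n-\overline{w}}{Bw_n-B\overline{w}}]<\infty$; since $\sum_n\lambda_n\gamma_n=\infty$ and the summands are nonnegative, the inner product must have $\varliminf$ equal to $0$, and cocoercivity transfers this to $\E[\|Bw_n-B\overline{w}\|^2]$. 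The same telescoping, applied now to the residual $R_n$, immediately gives $\sum_n\lambda_n\E[\|w_n-y_n-\gamma_n(\SG_n-B\overline{w})\|^2]<\infty$, which is the first sum in (iii).

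The one remaining subtlety, and where I expect the genuine work beyond bookkeeping, is the second sum in (iii): the recursion only produces the factor $\lambda_n(1-\lambda_n)$ in front of $\E[\|w_n-y_n\|^2]$, which degenerates when $\lambda_n=1$. To recover $\sum_n\lambda_n\E[\|w_n-y_n\|^2]<\infty$ I would use $\|w_n-y_n\|^2\leq 2\|w_n-y_n-\gamma_n(\SG_n-B\overline{w})\|^2+2\gamma_n^2\|\SG_n-B\overline{w}\|^2$; the first piece is summable by the line just proved, while for the second the conditional bound above gives $\lambda_n\gamma_n^2\E[\|\SG_n-B\overline{w}\|^2]\leq\sigma^2\chi_n^2+\lambda_n\gamma_n^2(1+2\sigma^2\alpha_n)\E[\|Bw_n-B\overline{w}\|^2]$, whose first term is summable by (A4) and whose second term is $\leq(2-\epsilon)\beta\lambda_n\gamma_n\E[\|Bw_n-B\overline{w}\|^2]\leq(2-\epsilon)\lambda_n\gamma_n\E[\scal{w_n-\overline{w}}{Bw_n-B\overline{w}}]$ by (A3) and cocoercivity, hence summable by (ii). Apart from this estimate, the crux of the entire argument is getting the sign and the constant $\epsilon$ right in the absorption step, which is precisely where (A3) enters and which renders the inner-product term strictly dissipative.
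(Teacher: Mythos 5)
Your proof is correct and follows the paper's strategy almost verbatim up to the master recursion: the same convex-combination bound for $\|w_{n+1}-\overline{w}\|^2$, the firm-nonexpansiveness inequality for $J_{\gamma_n A}$ (which you rederive directly from monotonicity of $A$ instead of citing it, and which you keep as an exact identity with the extra $-\lambda_n(1-\lambda_n)\|w_n-y_n\|^2$ term the paper simply drops), the same use of (A1)--(A2) on the conditional expectations after the same induction-based integrability check, and the same absorption of $\gamma_n(1+2\sigma^2\alpha_n)\|Bw_n-B\overline{w}\|^2$ into the inner product via cocoercivity and (A3), producing the dissipative coefficient $-\varepsilon\lambda_n\gamma_n$; parts (i), (ii) and the first sum in (iii) then follow by the identical telescoping. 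The one place you genuinely depart from the paper is the second sum in (iii). The paper controls $\sum_n\lambda_n\gamma_n^2\,\E[\|\SG_n-B\overline{w}\|^2]$ by combining the uniform bound $\E[\scal{w_n-\overline{w}}{Bw_n-B\overline{w}}]\le\beta^{-1}\E[\|w_n-\overline{w}\|^2]\le M$ (from part (i)) with (A4); as written this needs $\sum_n\lambda_n\gamma_n^2(1+2\sigma^2\alpha_n)<+\infty$, which follows from (A4) only when $B\overline{w}\neq 0$ (if $B\overline{w}=0$, condition (A4) gives no control on $\sum_n\lambda_n\gamma_n^2\alpha_n$, and the paper's own remark notes that $(\alpha_n)_{n\in\NN^*}$ may then be unbounded). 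You instead bound $\lambda_n\gamma_n^2(1+2\sigma^2\alpha_n)\E[\|Bw_n-B\overline{w}\|^2]\le(2-\varepsilon)\beta\lambda_n\gamma_n\E[\|Bw_n-B\overline{w}\|^2]\le(2-\varepsilon)\lambda_n\gamma_n\E[\scal{w_n-\overline{w}}{Bw_n-B\overline{w}}]$ using (A3) and cocoercivity, and then invoke the summability already established in part (ii). This is tighter: it covers the degenerate case $B\overline{w}=0$ with unbounded $\alpha_n$, so on this step your route actually repairs a small gap in the paper's argument rather than merely replicating it.
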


Proposition \ref{p:1} states similar properties to those stated for the forward-backward splitting algorithm
in \cite{siam05}. These properties are key to prove almost sure convergence, which is stated in the
next theorem. Depending on the monotonicity properties of the operator $B$, we get different 
convergence results. 

\begin{theorem} 
\label{t:2nv}  
Suppose that conditions (A1), (A2), (A3) and (A4) are satisfied.
Let $(w_n)_{n\in\NN^{*}}$ be the sequence generated by Algorithm \ref{a:maininc} and let $\overline{w}$ be a solution 
of Problem \ref{inc0}.
Then the following hold:
\begin{enumerate}
\item 
\label{t:2nvi-} $(w_n)_{n\in\NN^*}$ is stochastic quasi-Fej\`er monotone with respect to $\zer(A+B)$.
\item 
\label{t:2nvi} There exists an integrable random variable $\zeta_{\overline{w}} $ such that 
$\|w_n-\overline{w}\|^2\to\zeta_{\overline{w}}$ a.s.  
\item\label{t:2nvii} 
If $B$ is uniformly monotone at $\overline{w}$, 
then $w_n \to \overline{w}$ a.s.
\item \label{t:2nviii}
If $ B$  is strictly monotone at $\overline{w}$ and  weakly continuous, 
then there exists $\Omega_1\in\boldsymbol{\mathcal{A}}$ such that $\boldsymbol{\mathsf{P}}(\Omega_1)=1$, and,
for every $\omega\in\Omega_1$, there exists a subsequence $(w_{t_n}(\omega))_{n\in\NN^{*}}$ such that
 $w_{t_n}(\omega)\weakly\overline{w}$.
\end{enumerate}
\end{theorem}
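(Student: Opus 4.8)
The plan is to handle the four assertions in sequence, leaning throughout on Proposition~\ref{p:1} and on the conditional estimate that underlies its proof. The first thing I would record is a structural fact about the solution set: if $w,w'\in\zer(A+B)$, then $-Bw\in Aw$ and $-Bw'\in Aw'$, so monotonicity of $A$ gives $\scal{w-w'}{Bw'-Bw}\ge 0$ while monotonicity of $B$ gives $\scal{w-w'}{Bw-Bw'}\ge 0$; hence $\scal{w-w'}{Bw-Bw'}=0$ and cocoercivity forces $Bw=Bw'$. Thus $B$ is constant on $\zer(A+B)$, and in particular the scalar $\norm{B\overline w}^2$ entering $\chi_n^2$ does not depend on the chosen solution. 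Now the proof of Proposition~\ref{p:1} yields, for the distinguished solution, a pathwise inequality of the form $\E[\norm{w_{n+1}-\overline w}^2\mid\mathcal F_n]\le\norm{w_n-\overline w}^2-r_n+\varepsilon_n$ a.s., where $r_n\ge 0$ collects the cocoercivity and $\norm{w_n-y_n}^2$ terms and $\varepsilon_n$ is a fixed multiple of $\chi_n^2$, hence summable by (A4). Because that derivation uses $\overline w$ only through $-B\overline w\in A\overline w$ and through $\norm{B\overline w}^2$, the constancy just proved shows the \emph{same} summable $\varepsilon_n$ works for every $w\in\zer(A+B)$; together with $\E[\norm{w_1}^2]<\infty$ this is exactly the quasi-Fej\'er property, giving (i). For (ii) I would invoke the convergence theorem for stochastic quasi-Fej\'er sequences recalled in Section~\ref{sec:pre} (a Robbins--Siegmund type argument): discarding $r_n\ge 0$, $\norm{w_n-\overline w}^2$ converges a.s.\ to some $\zeta_{\overline w}\ge 0$, and Fatou together with the boundedness of $\E[\norm{w_n-\overline w}^2]$ from Proposition~\ref{p:1}(i) gives $\E[\zeta_{\overline w}]<\infty$.

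For (iii), since $B$ is cocoercive each summand $\scal{w_n-\overline w}{Bw_n-B\overline w}$ is nonnegative, so Tonelli's theorem upgrades the bound of Proposition~\ref{p:1}(iii) to the pathwise statement $\sum_n\lambda_n\gamma_n\scal{w_n-\overline w}{Bw_n-B\overline w}<\infty$ a.s. Uniform monotonicity at $\overline w$ gives $\scal{w_n-\overline w}{Bw_n-B\overline w}\ge\phi(\norm{w_n-\overline w})$, whence $\sum_n\lambda_n\gamma_n\phi(\norm{w_n-\overline w})<\infty$ a.s.; as $\sum_n\lambda_n\gamma_n=+\infty$ this forces $\varliminf_n\phi(\norm{w_n-\overline w})=0$ a.s. On the full-measure event where moreover $\norm{w_n-\overline w}^2\to\zeta_{\overline w}$ (from (ii)), I claim the limit $r:=\sqrt{\zeta_{\overline w}}$ vanishes: if $r>0$ then eventually $\norm{w_n-\overline w}\ge r/2$, and monotonicity of $\phi$ gives $\phi(\norm{w_n-\overline w})\ge\phi(r/2)>0$ (as $\phi$ vanishes only at $0$), contradicting the liminf. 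Hence $w_n\to\overline w$ a.s. Note this needs only monotonicity of $\phi$, not continuity.

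For (iv), arguing as above on a full-measure event $\Omega_1$ one has both $\varliminf_n\scal{w_n-\overline w}{Bw_n-B\overline w}=0$ and boundedness of $(w_n)$ (from the a.s.\ convergence of $\norm{w_n-\overline w}^2$). Fix $\omega\in\Omega_1$, take a subsequence realizing the liminf, and—by reflexivity of \HH—extract a further weakly convergent subsequence, still denoted $(w_{t_n}(\omega))$, with $w_{t_n}(\omega)\weakly\bar x$; along it the inner product still tends to $0$. I expect the main obstacle to be identifying $\bar x$ with $\overline w$, since one cannot pass to the limit directly in $\scal{w_{t_n}-\overline w}{Bw_{t_n}-B\overline w}$, a product of two merely weakly convergent sequences. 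I would resolve this with the splitting $\scal{w_{t_n}-\overline w}{Bw_{t_n}-B\overline w}=\scal{w_{t_n}-\bar x}{Bw_{t_n}-B\bar x}+\scal{w_{t_n}-\bar x}{B\bar x-B\overline w}+\scal{\bar x-\overline w}{Bw_{t_n}-B\overline w}$: the first term is $\ge\beta\norm{Bw_{t_n}-B\bar x}^2\ge 0$ by cocoercivity, the second tends to $0$ since $w_{t_n}-\bar x\weakly 0$ tested against a fixed vector, and the third tends to $\scal{\bar x-\overline w}{B\bar x-B\overline w}$ by weak continuity of $B$. Passing to the limit, nonnegativity of the first term forces $\scal{\bar x-\overline w}{B\bar x-B\overline w}\le 0$, whereas monotonicity gives $\ge 0$; hence this quantity is $0$, and strict monotonicity at $\overline w$ yields $\bar x=\overline w$. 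Therefore $w_{t_n}(\omega)\weakly\overline w$, which is (iv).
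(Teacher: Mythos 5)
Your proof is correct. Parts (i)--(iii) follow essentially the paper's own route: the conditional form of the estimate behind Proposition \ref{p:1} gives the quasi-Fej\'er property, Proposition \ref{p:fejer} (a supermartingale/Robbins--Siegmund argument) gives (ii), and the Tonelli-plus-nonsummability argument with $\phi$ gives (iii) --- the paper extracts a subsequence along which $\phi(\|w_{k_n}-\overline w\|)\to 0$ and combines it with convergence of the whole sequence $\|w_n-\overline w\|^2$, rather than arguing by contradiction with $r=\sqrt{\zeta_{\overline w}}>0$, but this is the same idea. One point in your write-up improves on the paper: you prove explicitly that $B$ is constant on $\zer(A+B)$, so that the single summable sequence $\varepsilon_n=2\sigma^2\chi_n^2$ works uniformly for every $w\in\zer(A+B)$, as the definition of stochastic quasi-Fej\'er monotonicity requires; the paper derives the inequality only for the distinguished solution $\overline w$ and leaves this uniformity implicit.

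Part (iv) is where you genuinely diverge, and your alternative is valid. The paper works at the level of $Bw_n$: from $\varliminf_n\E[\|Bw_n-B\overline w\|^2]=0$ it extracts a \emph{deterministic} subsequence converging in $L^1$, then a further subsequence along which $Bw_{p_n}\to B\overline w$ almost surely; for each $\omega$ it then takes a weak cluster point $\overline z(\omega)$ of $(w_{p_n}(\omega))_{n\in\NN^*}$ and identifies $B\overline z(\omega)=B\overline w$ by uniqueness of limits (weak continuity gives $Bw_{q_{p_n}}(\omega)\weakly B\overline z(\omega)$ while the strong convergence gives $Bw_{q_{p_n}}(\omega)\to B\overline w$), after which strict monotonicity at $\overline w$ forces $\overline z(\omega)=\overline w$. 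You instead stay pathwise throughout: Tonelli upgrades Proposition \ref{p:1} to $\varliminf_n\scal{w_n-\overline w}{Bw_n-B\overline w}=0$ a.s., you choose an $\omega$-dependent subsequence realizing the liminf, and you resolve the weak-times-weak difficulty via the splitting
\begin{equation*}
\scal{w_{t_n}-\overline w}{Bw_{t_n}-B\overline w}
=\scal{w_{t_n}-\bar x}{Bw_{t_n}-B\bar x}
+\scal{w_{t_n}-\bar x}{B\bar x-B\overline w}
+\scal{\bar x-\overline w}{Bw_{t_n}-B\overline w},
\end{equation*}
which is algebraically exact, with cocoercivity supplying the sign of the first term, weak convergence killing the second, and weak continuity handling the third. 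Both routes use the same two hypotheses (weak continuity and strict monotonicity at $\overline w$); yours dispenses with the $L^1$-to-a.s. subsequence extraction, a purely probabilistic step, at the price of a slightly more intricate deterministic limit computation, and it yields marginally more information --- every weak cluster point taken along a subsequence where the pairing tends to zero must equal $\overline w$. A cosmetic remark only: your citations of the items of Proposition \ref{p:1} by hard-coded roman numerals do not match the paper's (scrambled) internal labels, so adjust the cross-references before splicing.
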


This kind of convergence of the iterates is the one traditionally studied in the stochastic optimization
literature.  However, most papers focus on the finite dimensional setting, and require boundedness of
the variance of the stochastic estimate of the gradients or subgradients (namely, $\alpha_n=0$ in assumption (A2)). One exception is 
\cite{Barty07}, dealing with a stochastic projected subgradient algorithm on a Hilbert space. 
Weak almost sure convergence of the iterates generated by the stochastic forward-backward splitting
algorithm can be derived from the general results in  \cite{ComPes14}, 
without additional monotonicity assumptions on $A$ or $B$.   The assumptions in \cite[Proposition 5.7]{ComPes14}
allows for a nonvanishing stepsize but requires summability of the stochastic errors. 
Even in the minimization case, we are not aware of any paper proving convergence of the stochastic forward-backward algorithm 
with constant step-size without assuming that the variance of the stochastic estimate goes to zero. 

\begin{remark} \label{t:2nviv}
Under the same assumptions as in Theorem \ref{t:2nv}, suppose in addition that $B$ is strictly monotone at $\overline{w}$. 
The assumptions of Theorem \ref{t:2nv}\ref{t:2nviii} are satisfied 
in the following two cases: 
\begin{enumerate}
\item $\HH$ is finite dimensional. Indeed in this case the weak and strong topology coincide, and therefore $B$ is weakly continuous.
\item $B$ is bounded and linear, since in this case $B$ is weakly continuous. For instance, this covers the case of regularized quadratic minimization on Hilbert spaces.
\end{enumerate}
\end{remark}

\subsection{Nonasymptotic bounds}
In this section we focus on convergence in expectation.  
We provide results for the case when either  $A$ or $B$ is strongly monotone.
We derive a nonasymptotic bound for $\E[\|w_n-\overline{w}\|^2]$
similarly to  what has been done for the stochastic gradient algorithm for the
case of minimization of a smooth function in the finite dimensional case 
\cite[Theorem 1]{bach}. 
In the next theorem we will  consider the following assumption.
\begin{assumption}\label{ass:strcon}
Let $\overline{w}$ be a solution of Problem \ref{inc0}.
Furthermore,  suppose that $A$ is $\nu$-strongly monotone and $B$ is $\mu$-strongly monotone at $\overline{w}$, for some 
$(\nu,\mu) \in \left[0,+\infty\right[^2$ such that $\nu+\mu > 0$.  
\end{assumption}

To state the results more concisely, for every $c\in\RR$, we define the  function 
\begin{equation}
\label{eq:bach1}
\varphi_{c}\colon \left]0,\pinf\right[\to \RR\colon
t\mapsto 
\begin{cases}
(t^{c}-1)/c& \text{if $c \not=0$};\\
\log t& \text{if $c =0$}.
\end{cases}
\end{equation}

\begin{theorem}
\label{t:2}
Let $(\underline{\lambda},\overline{\alpha})\in\RPP^2$ and let $(w_n)_{n\in\mathbb{N}^*}$ be the sequence generated 
by Algorithm \ref{a:maininc}. 
Assume that conditions $(A1), (A2)$, $(A3)$, and Assumption \ref{ass:strcon} are  satisfied
and suppose that  $\inf_{n\in\NN^*} \lambda_n\ge\underline{\lambda}$, $\sup_{n\in\mathbb{N}^*}\alpha_n \leq \bar{\alpha}$, and 
that $\gamma_n =  c_1n^{-\theta}$ for some $\theta \in \left]0,1\right]$ and for some  $c_1\in\,]0,+\infty[$. 
Set 
\begin{equation} 
\label{e:ciao}
t = 1-2^{\theta-1}\geq 0, \quad c=\frac{c_1\underline{\lambda}(2\nu+\mu\varepsilon)}{(1+\nu)^2},\quad \text{and}\quad 
\tau=\frac{2\sigma^2c_1^2 (1+\overline{\alpha} \|B\overline{w}\| )}{c^2 }. 
\end{equation}
Let $n_0$ be the smallest integer such that for every integer $n\geq n_0>1$, it holds $\max\{c,c_1\} n^{-\theta}\leq 1.$
Define
$$(\forall n\in\NN^{*})\quad\quad s_n = \E[\|w_{n}-\overline{w}\|^2].$$ 
Then, for every $n\geq 2n_0$, the following hold:
 \begin{enumerate}
 \item \label{t:2i} Suppose that $\theta\in\left]0,1\right[$. Then
\begin{equation}
\label{eq:Est1}
 s_{n+1} \leq \Big(\tau c^2 \varphi_{1-2\theta}(n)
 + s_{n_0}\exp\Big(\dfrac{cn_{0}^{1-\theta}}{1-\theta}\Big) \Big)
\exp\Big(\dfrac{-ct(n+1)^{1-\theta}}{1-\theta} \Big)
+ \dfrac{\tau 2^{\theta}c}{(n-2)^{\theta}}
\end{equation}
\item \label{t:2ii}
Suppose that $\theta=1$. Then
\begin{equation}
\label{eq:Est11} 
s_{n+1}\leq s_{n_0}\Big(\dfrac{n_0}{n+1}\Big)^{c}+ 
\dfrac{\tau c^2}{(n+1)^{c}}\Big(1+ \dfrac{1}{n_0}\Big)^{c}\varphi_{c-1}(n)\,.
\end{equation}
\item
\label{t:2iii} The sequence $(s_n)_{n\in\NN^*}$ satisfies
\begin{equation} 
\label{eq:Est111}
s_n =\begin{cases} O(n^{-\theta}) &\text{if } \theta\in \left]0,1\right[\\ 
O(n^{-c}) &\text{if } \theta=1,\, c<1\\
O(n^{-1}\log n)  &\text{if } \theta=1,\,c=1\\
O(n^{-1})  &\text{if } \theta=1,\,c>1.
 \end{cases}
\end{equation}
\end{enumerate}
\end{theorem}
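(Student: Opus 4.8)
The plan is to establish a one-step recursive inequality for $s_n = \E[\|w_n - \overline{w}\|^2]$ and then solve it using the non-asymptotic version of Chung's lemma. First I would derive the basic recursion. Starting from the nonexpansiveness of the resolvent $J_{\gamma_n A}$ (in fact its firm nonexpansiveness, sharpened by the $\nu$-strong monotonicity of $A$, which makes $J_{\gamma_n A}$ a contraction with factor controlled by $\nu\gamma_n$) and the update $w_{n+1} = (1-\lambda_n)w_n + \lambda_n y_n$, I would expand $\|w_{n+1}-\overline{w}\|^2$, take conditional expectation with respect to $\mathcal{F}_n$, and use (A1) to replace $\E[\SG_n|\mathcal{F}_n]$ by $Bw_n$ in the cross term. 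The $\mu$-strong monotonicity of $B$ at $\overline{w}$ then yields a term $\scal{w_n-\overline{w}}{Bw_n-B\overline{w}} \geq \mu\|w_n-\overline{w}\|^2$, while the noise contribution is bounded above using (A2) by $\sigma^2(1+\alpha_n\|Bw_n\|^2)$. After collecting terms and using (A3) to keep $\gamma_n$ small enough that the quadratic-in-$\gamma_n$ terms are dominated, I expect to arrive at an inequality of the shape
\begin{equation*}
s_{n+1} \leq (1 - c\,\gamma_n) s_n + \tau c^2 \gamma_n^2,
\end{equation*}
where $c$ and $\tau$ are the constants defined in \eqref{e:ciao}, with $c$ absorbing $\underline{\lambda}$, $2\nu+\mu\varepsilon$, and the factor $(1+\nu)^{-2}$ coming from the contraction estimate, and $\tau$ absorbing $\sigma^2$, $\overline{\alpha}$, and $\|B\overline{w}\|$. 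The appearance of $\alpha_n\|Bw_n\|^2$ rather than $\alpha_n\|B\overline{w}\|^2$ in (A2) means I must bound $\|Bw_n\|^2$ in terms of $\|Bw_n - B\overline{w}\|^2$ and $\|B\overline{w}\|^2$ and fold the former into the descent term, which is precisely where (A3) and the slack $\varepsilon$ are consumed.

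Next I would plug $\gamma_n = c_1 n^{-\theta}$ into this recursion to get $s_{n+1} \leq (1 - c\,c_1 n^{-\theta}) s_n + \tau c^2 c_1^2 n^{-2\theta}$ (after suitably reabsorbing $c_1$ into the constants so that the statement reads as in \eqref{e:ciao}). This is exactly the hypothesis of Chung-type lemma for recursions $u_{n+1} \leq (1 - a n^{-\theta}) u_n + b n^{-2\theta}$. For $\theta \in \,]0,1[$, unrolling the recursion gives a product $\prod_{k=n_0}^{n}(1 - c\gamma_k)$ which I would bound by $\exp(-c\sum_{k}\gamma_k)$, and since $\sum_{k=n_0}^{n} k^{-\theta} \approx \varphi_{1-\theta}(n)$, the exponential factor $\exp(-c t (n+1)^{1-\theta}/(1-\theta))$ in \eqref{eq:Est1} emerges, with $t = 1 - 2^{\theta-1}$ accounting for the gap between the sum from $n_0$ versus from $2n_0$ (this is why the bound is stated only for $n \geq 2n_0$). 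The accumulated noise term, a weighted sum of $\gamma_k^2$ against the surviving products, I would estimate by splitting the summation range in half and using monotonicity of $k^{-\theta}$, producing both the $\tau c^2 \varphi_{1-2\theta}(n)$ contribution and the leading $\tau 2^\theta c / (n-2)^\theta$ term. For $\theta = 1$ the product becomes $\prod(1 - c/k)$, which telescopes to $\sim (n_0/(n+1))^c$ via the identity for $\prod(1-c/k)$, giving \eqref{eq:Est11} directly, with $\varphi_{c-1}(n)$ arising from $\sum_k k^{c-2}$.

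Finally part \ref{t:2iii} follows by reading off the asymptotics of the explicit bounds: for $\theta \in \,]0,1[$ the exponential factor decays faster than any polynomial, so the dominant term is $\tau 2^\theta c (n-2)^{-\theta} = O(n^{-\theta})$; for $\theta = 1$ the competition is between $n^{-c}$ from the first term and the second term, whose order is governed by $n^{-c}\varphi_{c-1}(n)$, giving the three regimes $c<1$, $c=1$, $c>1$ according to whether $\varphi_{c-1}(n)$ grows like $n^{1-c}$, $\log n$, or stays bounded. The main obstacle I anticipate is the careful bookkeeping in the first step: getting the constant $c = c_1\underline{\lambda}(2\nu+\mu\varepsilon)/(1+\nu)^2$ to come out exactly right requires simultaneously exploiting the strong monotonicity of $A$ through the resolvent contraction, the strong monotonicity of $B$ at $\overline{w}$, and the step-size restriction (A3), while correctly handling the $\|Bw_n\|^2$ term in (A2)—it is easy to lose the sharp constant or mis-track which part of the $2\nu + \mu\varepsilon$ combination comes from $A$ versus $B$. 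The subsequent application of Chung's lemma, while technically the bulk of the computation, is comparatively mechanical once the recursion is in hand.
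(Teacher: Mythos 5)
Your proposal follows essentially the same route as the paper: it derives the one-step recursion $s_{n+1}\leq(1-\eta_n)s_n+\tau\eta_n^2$ by combining the contraction property of $J_{\gamma_n A}$ induced by the $\nu$-strong monotonicity of $A$ (the source of the $(1+\nu)^{-2}$ factor), the $\mu$-strong monotonicity of $B$ at $\overline{w}$, conditions (A1)--(A3) to handle the noise term $\alpha_n\|Bw_n\|^2$ via cocoercivity, and then invokes the non-asymptotic Chung-type lemma (including the correct explanation of the midpoint split producing $t=1-2^{\theta-1}$ and the restriction $n\geq 2n_0$, and the three regimes for $\theta=1$). The only deviation is minor constant bookkeeping in how $c_1$ is absorbed into $c$ and $\tau$, which you explicitly flag and which matches the paper's definitions in \eqref{e:ciao}.
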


Theorem \ref{t:2} implies that, even without assuming $(A4)$, in the strongly monotone case 
there is convergence in quadratic mean for every $\theta\in\left]0,1\right]$.
Regardless of the choice of $\theta$, the constants in \eqref{eq:Est1}
and \eqref{eq:Est11} depend on $c$ and thus on the monotonicity constant of $A+B$. 
By \eqref{eq:Est111}, it follows that the best rate is
obtained with  $\theta=1$, for a choice of $c_1$ ensuring $c>1$. 
The resulting rate of convergence for the iterates is $O(1/n)$, which, in the special case of minimization,
 coincides with the one that can be obtained applying recent accelerated variants of proximal gradient methods.

\subsection{Proofs of the Main Results}\label{sec:pf}
We start with a result characterizing the asymptotic behavior of 
stochastic quasi-Fej\'er monotone sequences. 
The following statement is given in \cite[Lemma 2.3]{Barty07} without a proof. A 
version of  Proposition \ref{p:fejer} in the finite dimensional setting can also be found in 
\cite{Ermol68}.  The concept of stochastic Fej\'er sequences has been revisited and extended
in a Hilbert space setting in the recent preprint \cite{ComPes14}.
For the sake of completeness we provide a proof.
\begin{proposition}
\label{p:fejer}
 Let $S$ be a non-empty closed subset of $\HH$, and
let  $(w_n)_{n\in\NN^{*}}$ be stochastic quasi-Fej\'er monotone with respect to $S$.
Then the following hold.
\begin{enumerate}
\item \label{p:fejeri}
Let $w\in S$. Then, there exist  $\zeta_w\in\RR$ and an integrable random vector $\xi_w\in\HH$
such that 
$\E[\|w_n -w\|^2]\to\zeta_w$ and $\|w_n-w\|^2\to\xi_w$ almost surely.
\item \label{p:fejerii}
 $(w_n)_{n\in\NN^{*}}$ is bounded a.s.
\item \label{p:fejeriii}
The set of weak subsequential limits of $(w_n)_{n\in\NN^{*}}$ is non-empty a.s.
\end{enumerate}
\end{proposition}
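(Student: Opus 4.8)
The plan is to prove the three assertions about stochastic quasi-Fej\'er monotone sequences by combining the defining inequality \eqref{e:fejer} with the classical Robbins--Siegmund almost supermartingale convergence theorem. Throughout, set $\mathcal{F}_n = \sigma(w_1,\ldots,w_n)$.

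For part \ref{p:fejeri}, fix $w\in S$ and consider the nonnegative random process $u_n = \|w_n - w\|^2$. By hypothesis $\E[u_1]<+\infty$, and \eqref{e:fejer} reads $\E[u_{n+1}\mid\mathcal{F}_n]\leq u_n + \varepsilon_n$ almost surely, with $(\varepsilon_n)_{n\in\NN^*}\in\ell^1_+(\NN^*)$. First I would establish the almost sure convergence $u_n\to\xi_w$: this is exactly the conclusion of the Robbins--Siegmund theorem applied with the ``subtracted'' term equal to zero, so the nonnegative supermartingale $u_n - \sum_{k<n}\varepsilon_k$ converges almost surely to a limit, whence $u_n$ converges almost surely to an integrable limit $\xi_w\geq 0$. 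For the convergence in expectation, take expectations of \eqref{e:fejer} to get $\E[u_{n+1}]\leq \E[u_n]+\varepsilon_n$; the sequence $\E[u_n]-\sum_{k<n}\varepsilon_k$ is then a nonincreasing sequence of reals bounded below by $-\sum_k\varepsilon_k>-\infty$, hence convergent, and since $\sum_k\varepsilon_k<+\infty$ the sequence $\E[u_n]$ converges to a finite limit $\zeta_w$. (One should note $\xi_w$ need not equal $\zeta_w$ without a uniform integrability argument, which the statement does not claim.)

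For part \ref{p:fejerii}, almost sure boundedness of $(w_n)_{n\in\NN^*}$ follows immediately: by part \ref{p:fejeri} applied to any fixed $w\in S$, on a set of full probability $\|w_n - w\|^2$ converges and is therefore a bounded sequence of reals, so $(w_n)$ is bounded on that set. For part \ref{p:fejeriii}, on the same full-probability event the sequence $(w_n)$ is bounded in the Hilbert space $\HH$, and by the Banach--Alaoglu theorem bounded sequences in a Hilbert space admit weakly convergent subsequences; hence the set of weak subsequential limits is nonempty almost surely.

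The main technical obstacle is part \ref{p:fejeri}: the delicate point is passing from the conditional inequality to almost sure convergence of $u_n$, which is precisely where Robbins--Siegmund is invoked and where measurability and integrability must be handled with care (in particular, checking that each $u_n$ is integrable so that the conditional expectations are well defined, which is guaranteed inductively by $\E[u_1]<+\infty$ together with \eqref{e:fejer}). The separation between the almost sure limit $\xi_w$ and the limit in expectation $\zeta_w$ must be stated carefully, as these are genuinely distinct objects; everything in parts \ref{p:fejerii} and \ref{p:fejeriii} is then a routine consequence and presents no real difficulty.
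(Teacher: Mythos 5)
Your proposal is correct and takes essentially the same route as the paper: the paper compensates the Fej\'er inequality with the tail sums, setting $r_n=\|w_n-w\|^2+\sum_{k\geq n}\varepsilon_k$, which is a nonnegative supermartingale converging a.s.\ by the supermartingale convergence theorem, while you compensate with the partial sums $u_n-\sum_{k<n}\varepsilon_k$ (the same process up to the additive constant $\sum_{k}\varepsilon_k$) and invoke Robbins--Siegmund; the convergence of $\E[\|w_n-w\|^2]$ via the summability of $(\varepsilon_n)$ and the deduction of parts (ii)--(iii) from a.s.\ convergence plus weak sequential compactness of bounded sets are likewise identical in substance. One cosmetic slip: your compensated process is not nonnegative, only bounded below by $-\sum_{k}\varepsilon_k$, which still suffices for the convergence theorem (or switch to the paper's tail-sum version, which is genuinely nonnegative).
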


\begin{proof} 
 It follows from \eqref{e:fejer} that 
\begin{equation}
\label{e:refejer}
(\forall n\in\NN^{*})(\forall w\in S)\quad \E[\|w_{n+1}-w\|^2] 
\leq \E[\|w_n-w\|^2] + \varepsilon_n \,.
\end{equation}

\ref{p:fejeri}: Since the sequence $(\varepsilon_n)_{n\in\NN^*}$ is summable and
$\E[\|w_1-w\|^2]$ is finite, we 
derive from \eqref{e:refejer} that $(\E[\|w_n-w\|^2])_{n\in\NN^*}$ is a real positive quasi-Fej\'er
sequence \cite[Definition 1.1(2)]{Com01}, and therefore it converges to some
$\zeta_{w}\in \RR$ by \cite[Lemma 3.1]{Com01}. Set 
\begin{equation}
\label{e:rn}
(\forall n\in\NN^*)\quad r_n = \|w_n-w\|^2 + \sum_{k=n}^{\infty}\varepsilon_n.
\end{equation}
Then, it follows from \eqref{e:fejer} that
\begin{alignat}{2}
(\forall n\in\NN^*)\quad \E[r_{n+1}|\mathcal{F}_n] &=  \E[\|w_{n+1}-w\|^2  |\mathcal{F}_n] + \sum_{k=n+1}^{\infty}\varepsilon_n\notag\\
&\leq \|w_n-w\|^2 +  \sum_{k=n}^{\infty}\varepsilon_n\notag\\
&= r_n.
\end{alignat}
Therefore $(r_n)_{n\in\NN}$ is a real supermartingale. Since $ \sup_n \E[\min\{r_n, 0\}]=0 < +\infty$
 by \eqref{e:rn}, $r_n$ converges a.s to 
an integrable random variable \cite[Theorem 9.4]{Met82}, that we denote by $\xi_{w}$.

\ref{p:fejerii}\&\ref{p:fejeriii}: Follow directly by \ref{p:fejeri}.\
\end{proof}

For the convenience of the reader, we recall the following well-known property of the 
resolvent of a maximal monotone operator.

\begin{lemma}{\rm\cite[Proposition 23.7]{livre1}}
 \label{l:firm} 
Let $A\colon\HH\to 2^{\HH}$ be maximally monotone. Then, the resolvent of  $A$ is firmly-nonexpansive, i.e., 
\begin{equation}
\label{e:firm2}
 (\forall w\in\HH)(\forall u\in\HH)\quad \|J_A w-J_A u\|^2 
\leq \|u-w\|^2 - \|(w- J_A w)-(u- J_A u)\|^2.
\end{equation}
\end{lemma}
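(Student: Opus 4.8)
The plan is to unwind the definition of the resolvent and then reduce the inequality to the monotonicity of $A$ by means of a single algebraic identity. First I would set $p = J_A w$ and $q = J_A u$. Since $J_A = (\Id + A)^{-1}$, the relation $p = J_A w$ means precisely that $w \in p + Ap$, i.e. $w - p \in Ap$, and similarly $u - q \in Aq$. Thus $(p, w - p)$ and $(q, u - q)$ both lie in $\gra A$, and applying the monotonicity inequality \eqref{eq:mon} to this pair of points yields
\[
\scal{p - q}{(w - p) - (u - q)} \geq 0.
\]

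The key observation is the decomposition $w - u = (p - q) + \big((w - p) - (u - q)\big)$. Setting $x = p - q$ and $y = (w - p) - (u - q)$, this reads $w - u = x + y$, so that expanding the square gives
\[
\norm{u - w}^2 - \norm{(w - p) - (u - q)}^2 = \norm{x + y}^2 - \norm{y}^2 = \norm{x}^2 + 2\scal{x}{y}.
\]
Since $\norm{J_A w - J_A u}^2 = \norm{p - q}^2 = \norm{x}^2$, the inequality \eqref{e:firm2} to be proved is equivalent to $\scal{x}{y} \geq 0$, which is exactly the monotonicity inequality established in the first step. This closes the argument.

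There is no genuine obstacle here: once the resolvent identity $w - J_A w \in A(J_A w)$ is in hand, the result is an immediate consequence of monotonicity. The only point requiring care is the bookkeeping of signs in the decomposition $w - u = (p - q) + \big((w - p) - (u - q)\big)$, so that the cross term produced by expanding $\norm{x+y}^2$ matches the monotonicity pairing verbatim. I note that the well-definedness and single-valuedness of $J_A$, which guarantee that $p$ and $q$ are bona fide points of $\HH$, have already been recorded in the preliminaries, so no further justification of that kind is needed.
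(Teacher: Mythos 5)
Your proof is correct. The paper does not actually prove this lemma---it recalls it with a citation to \cite[Proposition 23.7]{livre1}---and your argument is precisely the standard one behind that reference: writing $w-u = (p-q) + \big((w-p)-(u-q)\big)$ with $w-p\in Ap$, $u-q\in Aq$, and observing that the firm nonexpansiveness inequality reduces exactly to the monotonicity pairing $\scal{p-q}{(w-p)-(u-q)}\geq 0$.
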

 
We next prove Proposition~\ref{p:1}. 

\begin{proof}[of Proposition~\ref{p:1}]
Let $n\in\NN^*$. Since $\overline{w}$ is a solution of Problem \ref{inc0} we have
\begin{equation}
\overline{w} = 
J_{\gamma_n A}(\overline{w}-\gamma_nB\overline{w})\,.
\end{equation}
It follows from \eqref{e:main1*} and the convexity of $\|\cdot\|^2$ that 
\begin{alignat}{2}
\label{e:est1}
 \|w_{n+1}-\overline{w}\|^2 
&= \|(1-\lambda_n)(w_n-\overline{w}) + \lambda_n(y_n-\overline{w})\|^2\notag\\
&\leq (1-\lambda_n)\|w_n-\overline{w}\|^2
+ \lambda_n \|y_n-\overline{w}\|^2.
\end{alignat}
Since $J_{\gamma_n A}$ is firmly non-expansive by Lemma \ref{l:firm}, setting
\begin{equation}
\label{e:u}
u_n = w_n-y_n -\gamma_n(\SG_n-B\overline{w}).
\end{equation}
we have 
\begin{alignat}{2}
\label{e:est2}
 \|y_n-\overline{w}\|^2
 &\leq \|(w_n-\overline{w}) - \gamma_n (\SG_n - B\overline{w}) \|^2-\|u_n\|^2\notag\\
\notag&= \|w_n-\overline{w}\|^2 
-2\gamma_n\scal{w_n-\overline{w}}{\SG_n - B\overline{w}}\\
&\quad+ \gamma^{2}_n\|\SG_n - B\overline{w} \|^2-\|u_n\|^2,
\end{alignat}
Since $\E[\|\SG_n\|^2]<+\infty$ by assumption, we derive that $\E[\|\SG_n - B\overline{w}\|^2]<+\infty$. 
On the other hand, by induction we get that $\E[\|w_n-\overline{w}\|^2]<+\infty$ and hence $\E[\|w_n-\overline{w}\|]<+\infty$ 
and therefore $\E[\left|\scal{w_n-\overline{w}}{\SG_n - B\overline{w}}\right|] <+\infty$,
so that $\E[\scal{w_n-\overline{w}}{\SG_n - B\overline{w}}|\mathcal{F}_n]$ is well-defined. 
Assumption (A1) yields
\begin{alignat}{2}
\label{e:est3}
\E[\scal{w_n-\overline{w}}{\SG_n - B\overline{w}}] 
&= \E[\E[\scal{w_n-\overline{w}}{\SG_n - B\overline{w}}|\mathcal{F}_n ]\notag\\
&= \E[\scal{w_n-\overline{w}}{\E[ \SG_n - B\overline{w}|\mathcal{F}_n]}]\notag\\
&=\E[\scal{w_n-\overline{w}}{Bw_n - B\overline{w}}].
\end{alignat}
Moreover, using assumption (A2) and the cocoercivity of $B$, we  have 
 \begin{alignat}{2}
\label{e:est4}
 &\E[\|\SG_n - B\overline{w} \|^2]=\notag\\
 &  =\E[\|Bw_n - B\overline{w} \|^2]
+\E[\|\SG_n - Bw_n\|^2]+2\E[\langle Bw_n - B\overline{w}, \SG_n - Bw_n \rangle] \notag\\
&=\E[\|Bw_n - B\overline{w} \|^2] +\E\left[\E[\|\SG_n - Bw_n\|^2|\mathcal{F}_n]\right]+2\E\left[\E[\langle Bw_n - B\overline{w}, \SG_n - Bw_n \rangle|\mathcal{F}_n]\right] \notag\\
&\leq \E[\|Bw_n - B\overline{w} \|^2] +\sigma^2(1+\alpha_n\E[\|Bw_n\|^2])+2\E\left[\langle Bw_n - B\overline{w},\E[\SG_n - Bw_n |\mathcal{F}_n\rangle]\right]\notag\\
&\leq (1+ 2\sigma^2\alpha_n)\E[\|Bw_n - B\overline{w} \|^2]
+ \sigma^2(1 + 2\alpha_n\|B\overline{w}\|^2)\notag\\
&\leq  
\frac{ 1 + 2\sigma^2\alpha_n}{\beta}\E[\scal{w_n-\overline{w}}{Bw_n - B\overline{w}}] 
+ 2\sigma^2(1+ 2\alpha_n\|B\overline{w}\|^2).
 \end{alignat}
Recalling the definition of $\varepsilon$,  from \eqref{e:est1}, \eqref{e:est2}, \eqref{e:est3}, and \eqref{e:est4} we get that 
\begin{alignat}{2}
\label{e:cons}
 \E[\|w_{n+1} -\overline{w} \|^2] 
&\leq (1-\lambda_n) \E[\|w_n-\overline{w}\|^2] + \lambda_n \E[\|y_n-\overline{w}\|^2] \notag\\
&\leq \E[\|w_n-\overline{w}\|^2] 
- \gamma_n\lambda_n\bigg(2-\frac{\gamma_n( 1 + 2\sigma^2\alpha_n)}{\beta}\bigg)\cdot\notag\\
&\quad \cdot\E[\scal{w_n-\overline{w}}{Bw_n-B\overline{w}}] + 2\sigma^2\chi^{2}_n- \lambda_n \E[\|u_n\|^2]\notag\\
&\leq \E[\|w_n-\overline{w}\|^2] 
- \varepsilon \gamma_n\lambda_n \E[\scal{w_n-\overline{w}}{Bw_n-B\overline{w}}] 
 + 2\sigma^2\chi^{2}_n -\lambda_n \E[\|u_n\|^2].
\end{alignat}

\ref{p:1i}:
Since the sequence $(\chi^{2}_n)_{n\in\NN^{*}}$ is summable by assumption $(A4)$, we derive 
from \eqref{e:cons} that 
$(\E[\|w_{n+1}-\overline{w} \|^2])_{n\in\NN^{*}}$ converges to a finite value.

\ref{p:1iii}: It follows from \eqref{e:cons} that 
\begin{equation}
\label{e:ese1}
\sum_{n\in\NN^{*}}
\gamma_n\lambda_n \E[\scal{w_n-\overline{w}}{Bw_n-B\overline{w}}] < +\infty.
\end{equation}
Since $\sum_{n\in\NN^{*}}\lambda_n\gamma_n = +\infty$ by $(A4)$, we obtain,
\begin{equation}
 \varliminf_{n\to\infty}\E[\scal{w_n-\overline{w}}{Bw_n-B\overline{w}}] = 0
 \end{equation}
which implies, by cocoercivity, $\varliminf_{n\to\infty}\E[\|Bw_n-B\overline{w} \|^2] =0$.

Since $B$ is cocoercive, it is Lipschitzian. Therefore, by  \ref{p:1i}, there exists $M\in\RPP$ such that
\begin{alignat}{2}
(\forall n\in\NN^{*})\quad
 \E[\scal{w_n-\overline{w}}{ Bw_n-B\overline{w}}] \leq \beta^{-1} 
\E[\|w_n-\overline{w}\|^2] \leq M .
\end{alignat}
Hence, we derive from (A4) and  \eqref{e:est4}  that
\begin{equation}
\label{e:abc1}
 \sum_{n\in\NN^{*}} \lambda_n\gamma^{2}_n 
\E[\|\SG_n-B\overline{w}\|^2] < +\infty.
\end{equation}

\ref{p:1ii} It follows from \eqref{e:cons} that  $\sum_{n\in\NN^{*}}
\gamma_n\lambda_n \E[\|u_n\|^2] < +\infty$.
Finally, by \eqref{e:abc1} we obtain
\begin{equation}
 \sum_{n\in\NN^{*}}\lambda_n\E[\|w_n-y_n\|^2] \leq 2\sum_{n\in\NN} \lambda_n
\E[\|u_n\|^2]
+ 2\sum_{n\in\NN^{*}} \lambda_n\gamma^{2}_n 
\E[\|\SG_n-B\overline{w}\|^2] < +\infty.
\end{equation}
Therefore, \ref{p:1ii} is proved.
\end{proof}

Next we prove Theorem~\ref{t:2nv}, which is based on Propositions \ref{p:fejer} and \ref{p:1}.

\begin{proof}[Theorem~\ref{t:2nv}]
\ref{t:2nvi-}
Let $n\in\NN^*$. Reasoning as in the proof of Proposition \ref{p:1}, we have 
\begin{alignat}{1}
\label{e:alm1}
\|y_n&-\overline{w}\|^2\\
&\leq \|w_n-\overline{w}\|^2\! -2\gamma_n\scal{w_n-\overline{w}}{\SG_n-B\overline{w}} + \gamma^{2}_n\|\SG_n - B\overline{w} \|^2 -\|u_n\|^2,
\end{alignat}
where $u_n=w_n-y_n -\gamma_n(\SG_n-B\overline{w})$  is defined as in \eqref{e:u}.

We next estimate the conditional expectation with respect to $\mathcal{F}_n$ 
 of each term in the right hand side of \eqref{e:alm1}. 
Since $w_n$ is $\mathcal{F}_n$-measurable, we have 
\begin{equation}
 \E[\|w_n-\overline{w}\|^2| \mathcal{F}_n] = \|w_n-\overline{w}\|^2,
\end{equation}
and using  condition (A1),
\begin{alignat}{2}
\label{e:alm2}
 \E[\scal{w_n-\overline{w}}{\SG_n 
- B\overline{w}}| \mathcal{F}_n] 
&= \scal{w_n-\overline{w}}{\E[ \SG_n - B\overline{w}| \mathcal{F}_n }\notag\\
&=\scal{w_n-\overline{w}}{Bw_n - B\overline{w}}. 
\end{alignat}
Noting that $Bw_n$ is $\mathcal{F}_n$-measurable since 
$w_n$ is $\mathcal{F}_n$-measurable and $B$ is continuous, and using condition (A2), 
we derive
 \begin{alignat}{2}
\label{e:alm3}
 &\E[\|\SG_n - B\overline{w} \|^2| \mathcal{F}_n]= \E[\|\SG_n - Bw_n\|^2 |\mathcal{F}_n  ]+\E[\|Bw_n - B\overline{w} \|^2| \mathcal{F}_n]+ \E[\langle Bw_n - B\overline{w}, \SG_n - Bw_n\rangle |\mathcal{F}_n ]\notag\\
&\leq \sigma^2(1+\alpha_n\|Bw_n\|^2)+\|Bw_n - B\overline{w} \|^2 \notag\\
&\leq  \|Bw_n - B\overline{w} \|^2+
\sigma^2(1+ 2\alpha_n  \|Bw_n - B\overline{w} \|^2+ 2\alpha_n\|B\overline{w}\|^2 )\notag\\
&\leq \frac{(1 + 2\sigma^2\alpha_n )}{\beta} \scal{w_n-\overline{w}}{Bw_n - B\overline{w}} 
+ \sigma^2(1+ 2\alpha_n\|B\overline{w}\|^2),
 \end{alignat}
where the last inequality follows from the cocoercivity
 of $B$.
Now, note that by convexity we have
\begin{alignat}{2}
\label{e:alm4}
\|w_{n+1}-\overline{w}\|^2 
&\leq (1-\lambda_n)\|w_n-\overline{w}\|^2
+ \lambda_n \|y_n-\overline{w}\|^2.
\end{alignat}
Taking the conditional expectation and invoking \eqref{e:alm1}, \eqref{e:alm2}, 
 \eqref{e:alm3}, we obtain, 
\begin{alignat}{2}
\label{e:concl2}
 \E[&\|w_{n+1}-\overline{w} \|^2|\mathcal{F}_n] 
\leq (1-\lambda_n) \|w_n-\overline{w}\|^2 + \lambda_n \E[\|y_n-\overline{w}\|^2|\mathcal{F}_n ] \notag\\
&\leq \|w_n-\overline{w}\|^2
- \gamma_n\lambda_n\bigg(2-\frac{\gamma_n(1+2\sigma^2\alpha_n)}{\beta}\bigg)\scal{Bw_n-B\overline{w}}{ w_n-\overline{w}}
+ 2\sigma^2\chi^{2}_n-\lambda_n\E[ \|u_n\|^2|\mathcal{F}_n]\notag\\
&\leq \|w_n-\overline{w}\|^2
- \varepsilon \gamma_n\lambda_n \scal{Bw_n-B\overline{w}}{ w_n-\overline{w}} + 
2\sigma^2\chi^{2}_n-\lambda_n\E[ \|u_n\|^2|\mathcal{F}_n].
\end{alignat}
Hence $(w_n)_{n\in\NN^{*}}$ is stochastic quasi-Fej\'er monotone with 
respect to the set
$\zer(A+B)$, which is nonempty, closed, and convex. 

\ref{t:2nvi}: It follows from Proposition \ref{p:fejer}\ref{p:fejeri} that
$(\|w_n-\overline{w}\|^2)_{n\in\NN^{*}}$ converges a.s 
to some integrable random variable $\zeta_{\overline{w}}$. 

\ref{t:2nvii} Since $B$ is uniformly monotone at $\overline{w}$,
there exists an increasing function 
$\phi\colon\left[0,+\infty\right[\to \left[0,+\infty\right[$ 
vanishing only at $0$ such that 
\begin{equation}
\label{e:unifm}
 \scal{Bw_n-B\overline{w}}{w_n-\overline{w}} \geq \phi(\|w_n-\overline{w}\|).
\end{equation}
and thus $\overline{w}$ is the unique
solution of Problem \ref{inc0}.
We derive from Proposition \ref{p:1} \ref{p:1iii} and \eqref{e:unifm} that 
\begin{equation}
\sum_{n\in\NN^{*}} \lambda_n\gamma_n\E[\phi(\|w_n-\overline{w}\|)]  < \infty,
\end{equation}
 and hence 
\begin{equation}
 \sum_{n\in\NN^{*}} 
\lambda_n\gamma_n\phi(\|w_n-\overline{w}\|)  < \infty\quad \text{a.s.}
\end{equation}
Since  $(\lambda_n\gamma_n)_{n\in\NN^{*}}$ is  not summable by (A4),
we have $\varliminf\phi(\|w_n-\overline{w}\|) =0$ a.s.
Consequently, taking into account  \ref{t:2nvi}, there exist $\Omega_1\subset \Omega$ and an integrable
random variable $\zeta_{\overline{w}}$ in $\HH$ such that
$P(\Omega_1)=1$, and, for every $\omega\in\Omega_1$,
$\varliminf\phi(\|w_n(\omega)-\overline{w}\|) =0$ and $\|w_n(\omega)-\overline{w}\|^2\to \zeta_{\overline{w}}$. 
Let $\omega\in\Omega_1$.
Then, there exists a subsequence $(k_n)_{n\in\NN^{*}}$ 
such that $ \phi(\|w_{k_n}(\omega)-\overline{w}\|)\to0$,
which implies that $\|w_{k_n}(\omega)-\overline{w}\| \to 0$, and
therefore $w_{n}(\omega)\to \overline{w}$. Since $\omega$ is 
arbitrary in $\Omega_1$, the statement follows.

\ref{t:2nviii}:
By Proposition \ref{p:1}\ref{p:1i}, 
$\varliminf \E[\|Bw_n-B\overline{w}\|^2]= 0$, and hence there exists 
a subsequence $(k_n)_{n\in\NN^{*}}$ such that 
\begin{equation}
\label{limi}
 \lim_{n\to\infty}\E[\|Bw_{k_n}-B\overline{w}\|^2]= 0.
\end{equation}
 Therefore,  there
exists a subsequence $(p_{n})_{n\in\NN^{*}}$ of $(k_n)_{n\in\NN^{*}}$ such that 
\begin{equation}
\label{e:gen1}
 \|Bw_{p_n}-B\overline{w}\|^2 \to 0 
\quad \text{almost surely}. 
\end{equation}
Thus, it follows from \ref{t:2nvi} and Proposition \ref{p:fejer}\ref{p:fejeriii} that there exists $\Omega_1\in\boldsymbol{\mathcal{A}}$ 
such that $\boldsymbol{\mathcal{P}}(\Omega_1)=1$ and, for every $\omega\in\Omega_1$, 
$(w_n(\omega))_{n\in\NN^*}$ has  weak cluster points and $\|Bw_{p_n}(\omega)-B\overline{w}\|^2 \to 0 
$.
Fix $\omega\in\Omega_1$ and let $\overline{z}(\omega)$ be a weak cluster point of $(w_{p_n}(\omega))_{n\in\NN^{*}}$, 
then there exists a subsequence 
$(w_{q_{p_n}}(\omega))_{n\in\NN^{*}}$ 
 such that
$ w_{q_{p_n}}(\omega)\weakly \overline{z}(\omega) $.
Since $B$ is weakly continuous,
 $Bw_{q_{p_n}}(\omega) \weakly B\overline{z}(\omega)$. 
Therefore,  $B\overline{w} = B\overline{z}(\omega)$,
and hence
$\scal{B\overline{z}(\omega)-B\overline{w}}{\overline{z}(\omega)-\overline{w}}  =0$. 
Since $B$ is strictly monotone at $\overline{w}$, we obtain, $\overline{w} = \overline{z}(\omega)$. 
This shows that $w_{q_{p_n}}(\omega) \weakly \overline{w}$. Defining $(t_n)_{n\in\NN^*}$ by 
setting, for every $n\in\NN^*$, $t_n=q_{p_n}$ the statement follows. 
\end{proof}

The following lemma establishes a non asymptotic bound for numerical sequences satisfying a 
given recursive inequality. This is a non asymptotic version of Chung's lemma \cite[Chapter 2, Lemma 5]{Pol87}
(see also \cite{bach}).

\begin{lemma}
\label{l:ocs}
 Let $\alpha$ be in $\left]0,1\right]$, 
and let $c$ and $\tau$ be in $]0,+\infty[$, 
let  $(\eta_n)_{n\in\NN^{*}}$ be the
sequence defined by $(\forall n\in \NN^*)\; \eta_n = c n^{-\alpha}$. 
Let $(s_{n})_{n\in\NN^*}$ be such that 
\begin{equation}
\label{e:iter}
(\forall n\in \NN^*)\quad 0 \leq s_{n+1} \leq (1-\eta_n) s_n + \tau\eta_{n}^2.
\end{equation}
Let $n_0$ be the smallest integer such that $(\forall n\geq n_0>1 )\; \eta_n\leq 1$, 
set $t=  1-2^{\alpha-1} \geq  0$, and define $\varphi_{1-2\alpha}$ and $\varphi_{c-1}$ 
as in \eqref{eq:bach1}.
Then, for every $n\geq 2n_0$,  if $\alpha\in\left]0,1\right[$,
\begin{equation}
 s_{n+1} \leq  \Big(\tau c^2 \varphi_{1-2\alpha}(n)
 + s_{n_0}\exp\Big(\dfrac{cn_{0}^{1-\alpha}}{1-\alpha}\Big) \Big)
\exp\Big(\dfrac{-ct(n+1)^{1-\alpha}}{1-\alpha} \Big)
+ \dfrac{\tau 2^{\alpha}c}{(n-2)^{\alpha}}
\end{equation}
 and if $\alpha=1$
\begin{equation}
s_{n+1}\leq  s_{n_0}\Big(\dfrac{n_0}{n+1}\Big)^{c}+ 
\dfrac{\tau c^2}{(n+1)^{c}}\Big(1+ \dfrac{1}{n_0}\Big)^{c}\varphi_{c-1}(n).
\end{equation}
\end{lemma}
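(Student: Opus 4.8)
The plan is to unroll the recursion \eqref{e:iter} into a closed-form upper bound and then estimate the resulting product and weighted sum. For $n\ge n_0$ the factor $1-\eta_n$ is nonnegative (this is the whole point of defining $n_0$), so iterating \eqref{e:iter} from $n_0$ preserves the inequality and gives
\[
s_{n+1} \le \Big(\prod_{k=n_0}^{n}(1-\eta_k)\Big)s_{n_0}
+ \tau\sum_{k=n_0}^{n}\eta_k^2\prod_{j=k+1}^{n}(1-\eta_j).
\]
Three elementary tools will carry the whole argument: the bound $1-x\le e^{-x}$, which turns the products into exponentials of partial sums; integral comparison for $\sum_k k^{-\rho}$, with the direction dictated by monotonicity of $k\mapsto k^{-\rho}$; and the telescoping identity $\sum_{k=n_0}^{n}\eta_k\prod_{j=k+1}^{n}(1-\eta_j)=1-\prod_{j=n_0}^{n}(1-\eta_j)\le 1$, which follows from $\eta_k\prod_{j=k+1}^{n}(1-\eta_j)=\prod_{j=k+1}^{n}(1-\eta_j)-\prod_{j=k}^{n}(1-\eta_j)$.

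For the case $\alpha\in\left]0,1\right[$ I would first treat the initial-condition term: $\prod_{k=n_0}^{n}(1-\eta_k)\le\exp(-c\sum_{k=n_0}^{n}k^{-\alpha})\le\exp(-\tfrac{c}{1-\alpha}((n+1)^{1-\alpha}-n_0^{1-\alpha}))$, using $\sum_{k=n_0}^{n}k^{-\alpha}\ge\int_{n_0}^{n+1}x^{-\alpha}\,dx$. Since $t=1-2^{\alpha-1}\le 1$, replacing $c$ by $ct$ only enlarges this (negative-exponent) bound, so the term is at most $s_{n_0}\exp(\tfrac{cn_0^{1-\alpha}}{1-\alpha})\exp(-\tfrac{ct(n+1)^{1-\alpha}}{1-\alpha})$, which is exactly the first summand of the asserted inequality. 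The remaining weighted sum I would split at the midpoint $m=\lfloor n/2\rfloor$; the hypothesis $n\ge 2n_0$ guarantees $m\ge n_0$, so both ranges start no earlier than $n_0$.

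On the lower range $n_0\le k\le m-1$ I bound $\prod_{j=k+1}^{n}(1-\eta_j)\le\exp(-\tfrac{c}{1-\alpha}((n+1)^{1-\alpha}-(k+1)^{1-\alpha}))$ and use $(k+1)^{1-\alpha}\le(n/2)^{1-\alpha}=2^{\alpha-1}n^{1-\alpha}\le 2^{\alpha-1}(n+1)^{1-\alpha}$ to pull out the common factor $\exp(-\tfrac{ct(n+1)^{1-\alpha}}{1-\alpha})$, leaving $\tau\sum_k\eta_k^2=\tau c^2\sum_k k^{-2\alpha}\le\tau c^2\varphi_{1-2\alpha}(n)$ by integral comparison (here $n_0\ge 2$ lets me start the integral at $1$). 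On the upper range $m\le k\le n$ I write $\eta_k^2\prod_{j=k+1}^{n}(1-\eta_j)=\eta_k\cdot\big(\eta_k\prod_{j=k+1}^{n}(1-\eta_j)\big)$, bound $\eta_k\le\eta_m\le c\,2^{\alpha}(n-2)^{-\alpha}$ (using $m\ge(n-2)/2$), and apply the telescoping identity to get $\sum_{k=m}^{n}\eta_k\prod_{j=k+1}^{n}(1-\eta_j)\le 1$; this produces the remainder $\tfrac{\tau 2^{\alpha}c}{(n-2)^{\alpha}}$. Adding the three contributions yields the first asserted bound.

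The case $\alpha=1$ is simpler because $t=0$ kills the exponential and no splitting is needed: $\prod_{k=n_0}^{n}(1-\tfrac{c}{k})\le(\tfrac{n_0}{n+1})^{c}$ and $\prod_{j=k+1}^{n}(1-\tfrac{c}{j})\le(\tfrac{k+1}{n+1})^{c}$ give the first summand of the second asserted inequality directly, while bounding $(k+1)^{c}\le(1+\tfrac1{n_0})^{c}k^{c}$ reduces the sum to $\tfrac{\tau c^2}{(n+1)^{c}}(1+\tfrac1{n_0})^{c}\sum_{k=n_0}^{n}k^{c-2}$, and $\sum_{k=n_0}^{n}k^{c-2}\le\varphi_{c-1}(n)$ again by integral comparison. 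I expect the main obstacle to be the bookkeeping in the subcase $\alpha\in\left]0,1\right[$: extracting precisely the reduced exponent $t=1-2^{\alpha-1}$ while keeping the leftover tail at the sharp polynomial order $(n-2)^{-\alpha}$ is exactly what forces the dyadic split at $\lfloor n/2\rfloor$, and hence the hypothesis $n\ge 2n_0$.
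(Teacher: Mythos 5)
Your proposal is correct and follows essentially the same route as the paper's own proof: unroll the recursion from $n_0$, turn products into exponentials via $1-x\le e^{-x}$ plus integral comparison, split the weighted sum at the midpoint (which is exactly where $t=1-2^{\alpha-1}$ and the hypothesis $n\ge 2n_0$ enter), handle the upper range with the telescoping identity bounded by $\eta_m\le 2^{\alpha}c(n-2)^{-\alpha}$, and treat $\alpha=1$ without splitting via $\prod_{j=k+1}^{n}(1-c/j)\le\bigl((k+1)/(n+1)\bigr)^{c}$. The only deviations are cosmetic: you bound each product in the lower range individually rather than uniformly as the paper does, and you make explicit the step (using $t\le 1$) that merges the initial-condition term into the $\exp\bigl(-ct(n+1)^{1-\alpha}/(1-\alpha)\bigr)$ factor, which the paper leaves implicit.
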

\begin{proof}  
Note that, for every $n\in\NN^*$ and for every integer $m\leq n$:
\begin{equation} \label{eq:varineq}
 \sum_{k=m}^n k^{-\alpha} \geq \varphi_{1-\alpha}(n+1) -\varphi_{1-\alpha} (m),
\end{equation}
where $\varphi_{1-\alpha}$ is defined by \eqref{eq:bach1}.
Since all terms in \eqref{e:iter} are positive for $n\geq n_0$, 
by applying the recursion $n-n_0$ times we have
 \begin{equation}\label{eq:rec}
 s_{n+1} \leq s_{n_0}\prod_{k=n_0}^n(1-\eta_k)  
+\tau \sum_{k=n_0}^n \prod_{i=k+1}^n(1-\eta_i)\eta_{k}^2. 
\end{equation}
Let us  estimate the first term in the right hand side of \eqref{eq:rec}. 
Since  $1-x\leq \exp(-x)$  for every $x\in\mathbb{R}$, from  \eqref{eq:varineq}, we derive
\begin{alignat}{2}
\label{e:conca}
\nonumber s_{n_0}\prod_{k=n_0}^n\left(1-\eta_k\right)  
&= s_{n_0} \prod_{k=n_0}^n\Big(1-\frac{c}{k^{\alpha}}\Big)
\leq s_{n_0} \exp\left(-c\sum_{k=n_0}^n k^{-\alpha} \right)\\ & \leq\begin{cases}
    s_{n_0}\Big(\dfrac{n_0}{n+1}\Big)^{c}& \text{if $\alpha = 1$},\\
    \\[-2ex]
s_{n_0} \exp\Big(\dfrac{c}{1-\alpha}(n_{0}^{1-\alpha} - (n+1)^{1-\alpha}) \Big)
& \text{if $0< \alpha < 1$.}
   \end{cases}
\end{alignat}
To estimate the second term on the right hand side of \eqref{eq:rec}, let us first consider the case $\alpha < 1$,
and let $m\in\mathbb{N}^*$ such that $n_0\leq n/2 \leq  m+1 \leq (n+1)/2$. We have 
\begin{alignat}{2}
 \sum_{k= n_0}^n& \prod_{i=k+1}^n(1-\eta_i)\eta_{k}^2 =\sum_{k= n_0}^m \prod_{i=k+1}^n(1-\eta_i)\eta_{k}^2 +\sum_{k=m+1}^n \prod_{i=k+1}^n(1-\eta_i)\eta_{k}^2 \notag\\
&\leq \exp\big(-\sum_{i=m+1}^n\eta_i\big)\sum_{k=n_0}^m\eta_{k}^{2} + {\eta_m} \sum_{k=m+1}^n \left(\prod_{i=k+1}^n(1-\eta_i)-\prod_{i=k}^n(1-\eta_i)\right)\notag\\ 
&=\exp\big(-\sum_{i=m+1}^n\eta_i\big)\sum_{k=n_0}^m\eta_{k}^{2} + {\eta_m}  \left(1-\prod_{i=m+1}^n(1-\eta_i)\right)\notag\\ 
&\leq \exp\big(-\sum_{i=m+1}^n\eta_i\big)\sum_{k=n_0}^m\eta_{k}^{2} +\eta_m\notag\\
&\leq 
c^2\exp\Big(\frac{c}{1-\alpha}( (m+1)^{1-\alpha} - (n+1)^{1-\alpha}) \Big)
\varphi_{1-2\alpha}(n) + \eta_m\\
\label{eq:rhss}&\leq 
c^2\exp\Big(\frac{-ct(n+1)^{1-\alpha}}{1-\alpha} \Big)
\varphi_{1-2\alpha}(n) + \frac{2^{\alpha} c }{\mu(n-2)^{\alpha}}.
\end{alignat}
Hence, combining \eqref{e:conca} and \eqref{eq:rhss}, for $\alpha\in\left]0,1\right[$ we get
\begin{alignat}{2}
 \quad 
s_{n+1} &\leq\! \Big(\tau c^2 \varphi_{1-2\alpha}(n)
 + s_{n_0}\exp\!\Big(\frac{cn_{0}^{1-\alpha}}{1-\alpha}\Big)\! \Big)
\exp\!\Big(\frac{-ct(n+1)^{1-\alpha}}{1-\alpha} \Big)
+ \frac{\tau 2^{\alpha}c}{(n-2)^{\alpha}}
\end{alignat}
We next estimate the second term  in the right hand side of \eqref{eq:rec} in the case $\alpha =1$. We have
\begin{alignat}{2}
 \notag \sum_{k= n_0}^n \prod_{i=k+1}^n(1-\eta_i)\eta_{k}^2 
&=  \frac{c^2}{(n+1)^{c}}\Big(1+ \frac{1}{n_0}\Big)^{c} 
\sum_{k= n_0}^{n}\frac{1}{k^{2-c}} \\
&\leq   \frac{ c^2}{(n+1)^{c}}\Big(1+ \frac{1}{n_0}\Big)^{c}\varphi_{c-1}(n).
\end{alignat}
Therefore,  for $\alpha =1$, we obtain,
\begin{equation}
 s_{n+1} \leq s_{n_0}\Big(\frac{n_0}{n+1}\Big)^{c}+
 \frac{\tau c^2}{(n+1)^{c}}\Big(1+ \frac{1}{n_0}\Big)^{c} \varphi_{c-1}(n),
\end{equation}
which completes the proof.
\end{proof}

We are now ready to prove Theorem \ref{t:2}.
\begin{proof}(Theorem \ref{t:2}) 

Since  $\mu+\nu>0$, then $A+B$ is strongly monotone at $\overline{w}$. 
Hence, problem \eqref{e:inc0} has a unique solution, 
i.e, $\zer(A+B)  =\{\overline{w}\}$. Let $n\in\NN^*$.
Since $\gamma_n A$ is $\gamma_n\nu$-strongly monotone, 
 by \cite[Proposition 23.11]{livre1} $J_{\gamma_n A}$ is $(1+\gamma_n\nu)$-cocoercive, 
 and then
\begin{align}
\|y_n - \overline{w}\|^2
=\|J_{\gamma_nA}(w_n-\gamma_n \SG_n)-J_{\gamma_nA}(\overline{w}-\gamma_n B\overline{w})\|^2 \\
  \leq \frac{1}{(1+\gamma_n\nu)^2}
\|(w_n-\overline{w}) - \gamma_n (\SG_n - B\overline{w}) \|^2.
\end{align}
Next, proceeding as in the proof of Proposition \ref{p:1} and recalling
\eqref{e:est3}-\eqref{e:est4}, we obtain
\begin{align}
 \nonumber\E[\|y_n -\overline{w}\|^2] \leq &\frac{1}{(1+\gamma_n\nu)^2}
\left(\E[\|w_n-\overline{w}\|^2]-\gamma_n\left(2-\gamma_n\frac{1+2\sigma^2\alpha_n}{\beta}\right)\cdot \right.\\
 \label{eq:ymenw}& \cdot\E[\scal{w_n-\overline{w}}{Bw_n-B\overline{w}}]
+2\gamma_n^2\sigma^2 (1+\alpha_n\|B\overline{w}\|^2)\bigg).
\end{align}
Since $B$ is strongly monotone of parameter $\mu$ at $\overline{w}$, 
\begin{equation}\label{eq:strmono}
 \scal{Bw_n-B\overline{w}}{w_n-\overline{w}}\geq \mu\|w_n-\overline{w}\|^2\,.
\end{equation} 
Therefore, from \eqref{eq:ymenw},
recalling the definition of $\varepsilon$ in $(A3)$, we get
 \begin{align}
\label{eq:strmon}
\lambda_n\E[\|y_n -\overline{w}\|^2] & \leq 
 \frac{\lambda_n}{(1+\gamma_n\nu)^2}\bigg((1-\gamma_n\mu\epsilon)\E[\|w_n-\overline{w}\|^2]
+ 2\sigma^2\chi_n^2\bigg).
\end{align}
Hence, by definition of $w_{n+1}$,
\begin{align}
\label{e:consq}
 \E[\|w_{n+1}-\overline{w} \|^2]  
&\leq \bigg(1-\frac{\lambda_n\gamma_n(2\nu + 
\gamma_{n}\nu^2+\mu\epsilon)}{(1+\gamma_n\nu)^2}\bigg)\E[\|w_n-\overline{w}\|^2]+  
\frac{ 2\sigma^2\chi^{2}_n}{(1+\gamma_n\nu)^2}.
\end{align}
Now, suppose that $n\geq n_0$. 
Since $\gamma_n\leq \gamma_{n_0}=c_1 n_0^{-\theta}\leq 1$, we have
\begin{equation}\label{eq:l1}
\frac{\lambda_n\gamma_n(2\nu + \gamma_{n}\nu^2+2\mu\epsilon)}{(1+\gamma_n\nu)^2} \geq 
\frac{\underline{\lambda}(2\nu+\mu\varepsilon)}{(1+\nu)^2} \gamma_n=c n^{-\theta}.
\end{equation}
On the other hand, 
\begin{equation}\label{eq:l2}
\frac{ 2\sigma^2\chi^{2}_n}{(1+\gamma_n\nu)^2}
\leq {2\sigma^2 (1+\overline{\alpha} \|B\overline{w}\|^2)} c_1^2n^{-2\theta}\,. 
\end{equation}
Then, putting together \eqref{e:consq}, \eqref{eq:l1}, and \eqref{eq:l2}, we get
\begin{align}
\label{e:consq2}
 \E[\|w_{n+1}-\overline{w} \|^2]  
&\leq (1-\eta_n)\E[\|w_n-\overline{w}\|^2]+ \tau\eta_n^2,
\end{align}
with $\tau=2\sigma^2c_1^2 (1+\overline{\alpha} \|B\overline{w}\|^2 )/c^2$ and $\eta_n=c n^{-\theta}$.

\ref{t:2i}\&\ref{t:2ii}: Inequalities \eqref{eq:Est1} and \eqref{eq:Est11}  follow from \eqref{e:consq2} by
applying Lemma \ref{l:ocs}.

\ref{t:2iii} \eqref{eq:Est111}
Let $\theta\in\left]0,1\right[$. Then \eqref{eq:Est1} yields $\E[\|w_{n+1}-\overline{w}\|^2]=O(n^{-\theta})$.
Let $\theta=1$. Then \eqref{eq:Est11} implies $\E[\|w_{n+1}-\overline{w}\|^2]=O(n^{-c})+O(n^{-\theta}\varphi_{c-1}(n))$.
If $c\neq 1$, it follows from \eqref{eq:bach1} that $\varphi_{c-1}(n)=O(n^{c-1})$, and 
in this case $\E[\|w_{n+1}-\overline{w}\|^2]=O(n^{-c})+O(n^{-1})$. 
If $c=1$, then it follows again from \eqref{eq:bach1} that  $\E[\|w_{n+1}-\overline{w}\|^2]=O(n^{-1})+O(n^{-1}\log n)$.
\end{proof}

\section{Special cases}\label{sec:sc}

In this section, we study two special instances of Problem \ref{inc0}, namely
 variational inequalities and minimization problems. Moreover, for variational inequalities, we
prove an additional result, showing that a suitably defined merit function \cite{Aus76} goes to zero when evaluated 
on the iterates of the stochastic forward-backward algorithm. This merit function has been used to 
quantify the inaccuracy of an approximation of the solution in \cite{Jud11}.

\subsection{Variational Inequalities}
In this section we focus on a special case of Problem \ref{inc0}, assuming that $A$ is the 
subdifferential of $G\in\Gamma_0(\HH)$.   

\begin{problem}
\label{varine}
 Let $B\colon\HH\to\HH$ be a $\beta$-cocoercive operator, for some 
$\beta \in \left]0,+\infty\right[$,
let $G$ be a function in $\Gamma_0(\HH)$. 
The problem is to solve the following  variational inequality \cite{Lion67,Zei90,livre1}
\begin{equation}
\label{e:varine}
 \text{find $\overline{w}\in\HH $ such that }
\quad (\forall w\in\HH)\quad \scal{\overline{w} -w}{B\overline{w}} + G(\overline{w}) \leq G(w),
\end{equation}
under the assumption that \eqref{e:varine} has at least one solution.
\end{problem}
There is a line of research studying stochastic algorithms for variational 
inequalities on finite dimensional spaces. The sample average approximation  has been studied e.g. 
in \cite{Sha03,CheWetZha12} (see also references therein), and a mirror proximal stochastic approximation algorithm to solve variational 
inequalities corresponding to a maximal monotone
operator has been proposed in \cite{Jud11}. A stochastic iterative proximal method has been proposed in \cite{KosNedSha13}, and almost 
sure convergence properties of a stochastic forward-backward  splitting algorithm for solving strongly monotone variational inequalities 
has been studied in \cite{JiaXu08}.

In the setting of Problem \ref{varine},  let $(\SG_n)_{n\in\NN^*}$ be a random process taking 
values in $\HH$ such that 
$(\forall n\in\NN^*)\; \E[\| \SG_n\|^2]  < +\infty$, and 
$(\forall n\in\mathbb{N}^*)$ $\gamma_n=c_1n^{-\theta}$ for some  $\theta \in \left]0,1\right]$ and for some  $c_1\in\,]0,+\infty[$. 
Let $(\lambda_n)_{n\in\NN^*}$ be a sequence in $\left]0,1\right]$, and let $w_1\colon\Omega \to \HH$ be a random variable such that $\E[\|w_1\|^2]<+\infty$. Set 
\begin{equation}
\label{e:ex11}
(\forall n\in\NN^*)\quad
\begin{array}{l}
\left\lfloor
\begin{array}{l}
z_n = w_n- \gamma_n \SG_n\\
y_{n} = \prox_{\gamma_n G}z_n\\
w_{n+1} = (1-\lambda_n) w_{n} + \lambda_ny_{n}.\\
\end{array}
\right.\\[2mm]
\end{array}
\end{equation}

\begin{corollary}\label{c:1mezzo} Suppose that conditions $(A1),(A2),(A3)$, and $(A4)$ are satisfied. 
Let $(w_n)_{n\in\NN^*}$ be the sequence defined by \eqref{e:ex11}.
Then  
\begin{enumerate}
\item\label{c:2nvii} 
If $B$ is uniformly monotone at $\overline{w}$, 
then $w_n \to \overline{w}$ a.s.
\item \label{c:2nviii}
If $ B$  is strictly monotone at $\overline{w}$ and  weakly continuous, 
then there exists a subsequence $(w_{t_n})_{n\in\NN^{*}}$ such that
 $w_{t_n}\weakly\overline{w}$ a.s.
\item \label{c:2nviv}
 If $B$ is strictly monotone, 
and either $\HH$ is finite dimensional space or $B$ is a bounded and linear, 
 then there exists $\Omega_1\in\boldsymbol{\mathcal{A}}$ such that $\boldsymbol{\PP}(\Omega_1)=1$ such that, for almost every $\omega\in\Omega_1$, there exists a subsequence $(w_{t_n})_{n\in\NN^{*}}$ such that
 $w_{t_n}(\omega)\weakly\overline{w}$.
\end{enumerate}               
\end{corollary}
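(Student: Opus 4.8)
The plan is to observe that Corollary~\ref{c:1mezzo} is essentially a direct specialization of Theorem~\ref{t:2nv} to the case where the maximally monotone operator $A$ is the subdifferential $\partial G$ of a function $G\in\Gamma_0(\HH)$. The first step is to verify that Problem~\ref{varine} is an instance of Problem~\ref{inc0}: by the standard characterization of the proximity operator (equivalently, by Fermat's rule applied to \eqref{e:prox}), the variational inequality \eqref{e:varine} is equivalent to $0\in\partial G(\overline{w})+B\overline{w}$, i.e.\ $\overline{w}\in\zer(\partial G+B)$. Since $G\in\Gamma_0(\HH)$, its subdifferential $\partial G$ is maximally monotone, and $B$ is $\beta$-cocoercive by hypothesis, so the pair $(\partial G,B)$ satisfies all the structural assumptions of Problem~\ref{inc0}. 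Moreover, because $\partial G=A$, the resolvent identity $J_{\gamma_n\partial G}=\prox_{\gamma_n G}$ recalled in \eqref{e:prox} shows that the recursion \eqref{e:ex11} is \emph{identical} to the recursion \eqref{e:main1*} of Algorithm~\ref{a:maininc} with this choice of $A$; the only restriction is that $\lambda_n$ is now taken in $\left]0,1\right]$ and $\gamma_n=c_1n^{-\theta}$, which is a sub-case of the range allowed in Algorithm~\ref{a:maininc}.

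With this identification in place, parts \ref{c:2nvii} and \ref{c:2nviii} follow immediately by applying Theorem~\ref{t:2nv}\ref{t:2nvii} and Theorem~\ref{t:2nv}\ref{t:2nviii} respectively, since conditions $(A1)$--$(A4)$ are assumed to hold. For part \ref{c:2nvii}, uniform monotonicity of $B$ at $\overline{w}$ gives $w_n\to\overline{w}$ almost surely directly from Theorem~\ref{t:2nv}\ref{t:2nvii}. For part \ref{c:2nviii}, strict monotonicity at $\overline{w}$ together with weak continuity of $B$ yields, via Theorem~\ref{t:2nv}\ref{t:2nviii}, the existence of a full-measure event $\Omega_1$ on which a subsequence converges weakly to $\overline{w}$; this is exactly the assertion to be proved (stated as $w_{t_n}\weakly\overline{w}$ a.s.).

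Part \ref{c:2nviv} is the one requiring a small additional argument, and I expect it to be the only nontrivial step: here $B$ is assumed strictly monotone (globally), but weak continuity is not assumed outright. The plan is to supply weak continuity from the two stated hypotheses, precisely as in Remark~\ref{t:2nviv}. In the finite-dimensional case, weak and strong topologies coincide, so continuity of $B$ (which holds because $B$ is cocoercive, hence $\beta^{-1}$-Lipschitz) gives weak continuity. In the case where $B$ is bounded and linear, weak continuity is automatic for bounded linear operators. In either case strict monotonicity of $B$ implies in particular strict monotonicity at $\overline{w}$, so the hypotheses of Theorem~\ref{t:2nv}\ref{t:2nviii} are met, and the conclusion of part \ref{c:2nviv} follows from it. The main (and essentially only) obstacle is thus the bookkeeping of reducing the two listed scenarios to weak continuity; everything else is a transcription of Theorem~\ref{t:2nv} through the identification $A=\partial G$, $J_{\gamma_n A}=\prox_{\gamma_n G}$.
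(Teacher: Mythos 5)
Your proposal is correct and follows exactly the paper's route: the paper's proof of Corollary~\ref{c:1mezzo} is simply ``the results follow from Theorem~\ref{t:2nv}'', relying implicitly on the identifications you spell out ($A=\partial G$ maximally monotone, the variational inequality \eqref{e:varine} equivalent to $0\in\partial G(\overline{w})+B\overline{w}$, and $J_{\gamma_n\partial G}=\prox_{\gamma_n G}$ so that \eqref{e:ex11} is an instance of \eqref{e:main1*}), with part \ref{c:2nviv} handled via the two weak-continuity scenarios of Remark~\ref{t:2nviv}. Your write-up just makes these implicit reductions explicit, which is a faithful and complete rendering of the same argument.
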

\begin{proof} The results follow from Theorem \ref{t:2nv}. 
\end{proof}

The following assumption will be used in the next corollary. 
\begin{assumption}\label{ass:strcon2}  Let $\overline{w}$ be a solution of \eqref{e:varine}. Suppose that $G$ is $\nu$-strongly convex and $B$ is $\mu$-strongly monotone at $\overline{w}$ for some  $(\nu,\mu) \in \left[0,+\infty\right[^2$ such that $\nu+\mu > 0$.
\end{assumption}
Note that, while on $B$ we can assume a local strong monotonicity property, on the function $G$ we need a global assumption.

\begin{corollary} 
\label{c:1} 
Let $\underline{\lambda}\in\left]0,1\right]$ and let $\overline{\alpha}\in\left[0,+\infty\right[$. Assume that conditions  (A1), (A2), (A3) and  Assumption \ref{ass:strcon2} are satisfied, with $\sup_{n\in\mathbb{N}^*}\alpha_n \leq \bar{\alpha}$
and $\inf_{n\in\NN^*} \lambda_n\geq \underline{\lambda}$.   Let $(w_n)_{n\in\NN^*}$ be the sequence defined by \eqref{e:ex11} with $\gamma_n=c_1 n^{-\theta}$ for some $c_1\in\left]0,+\infty\right[$ and $\theta\in \left]0,1\right]$. 
Set $t= 1-2^{\theta-1}\geq 0$, 
$c={c_1\underline{\lambda}(2\nu+\mu\varepsilon)}/{(1+\nu)^2} $, 
$\tau={2\sigma^2c_1^2 (1+\overline{\alpha} \|B\overline{w}\| )/c^2 }$ 
and let $n_0$ be the smallest integer such that $(\forall n\geq n_0>1)\; \max\{c,c_1\} n^{-\theta}\leq 1.$ 
Then, by setting $s_n=\E[\|w_n-\overline{w}\|^2]$, the following holds
\begin{enumerate}
\item If $\theta\in\left]0,1\right[$, then, for every $n\in\NN^*$,  $s_{n+1}$ fulfills \eqref{eq:Est1}.
\item If $\theta=1$, then, for every $n\in\NN^*$, $s_{n+1}$ fulfills \eqref{eq:Est11}. 
\item  $(s_n)_{n\in\NN^*}$ satisfies \eqref{eq:Est111}.
\end{enumerate}
\end{corollary}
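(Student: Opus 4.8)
The plan is to recognize Corollary~\ref{c:1} as a direct specialization of Theorem~\ref{t:2} to the case $A=\partial G$, so that essentially no new work is required beyond verifying that the hypotheses of the theorem hold in the variational-inequality setting. First I would observe that since $G\in\Gamma_0(\HH)$, its subdifferential $\partial G$ is maximally monotone by the standard Rockafellar theorem, and the recursion \eqref{e:ex11} is literally Algorithm~\ref{a:maininc} with $A=\partial G$, because $J_{\gamma_n\partial G}=\prox_{\gamma_n G}$ by the identification recalled in \eqref{e:prox}. Thus the iterates $(w_n)_{n\in\NN^*}$ defined by \eqref{e:ex11} coincide with those produced by the general algorithm, and the stepsize prescription $\gamma_n=c_1n^{-\theta}$ matches the one required in Theorem~\ref{t:2}.

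Next I would match the strong-monotonicity hypotheses. Assumption~\ref{ass:strcon2} posits that $G$ is $\nu$-strongly convex and that $B$ is $\mu$-strongly monotone at $\overline{w}$, with $\nu+\mu>0$. The only point needing a remark is that $\nu$-strong convexity of $G$ is equivalent to $\nu$-strong monotonicity of $\partial G$, i.e.\ $A=\partial G$ is $\nu$-strongly monotone in the global sense required by Assumption~\ref{ass:strcon}; this is exactly why the corollary insists on a global assumption on $G$ rather than a pointwise one, as noted in the sentence preceding the statement. With this translation, Assumption~\ref{ass:strcon} holds verbatim with the same constants $\nu$ and $\mu$, and the quantities $t$, $c$, and $\tau$ defined in the corollary are identical to those in \eqref{e:ciao}.

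Having checked that (A1), (A2), (A3) together with Assumption~\ref{ass:strcon} are all in force, I would simply invoke Theorem~\ref{t:2}. Its three conclusions give precisely \eqref{eq:Est1} for $\theta\in\zeroun$, \eqref{eq:Est11} for $\theta=1$, and the asymptotic rates \eqref{eq:Est111}, which are exactly the three claims of the corollary with $s_n=\E[\|w_n-\overline{w}\|^2]$ and the same threshold index $n_0$. The existence and uniqueness of the solution $\overline{w}$ follows from strong monotonicity of $A+B$ at $\overline{w}$, as already established in the proof of Theorem~\ref{t:2}, and one should note that a solution of the variational inequality \eqref{e:varine} is the same as a zero of $A+B=\partial G+B$, which closes the loop.

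I do not expect a genuine obstacle here, since the argument is entirely a matter of specialization. The one subtlety worth stating carefully is the equivalence between strong convexity of $G$ and strong monotonicity of $\partial G$, together with the remark that the local-versus-global distinction is forced upon us: local strong convexity of $G$ would not yield the global strong monotonicity of $A$ that Theorem~\ref{t:2} uses, which is why the hypothesis on $G$ must be global while the one on $B$ may remain pointwise at $\overline{w}$.
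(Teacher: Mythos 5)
Your proposal is correct and follows exactly the paper's own route: the paper's proof is the one-line reduction ``set $A=\partial G$, then Problem \ref{varine} becomes a particular case of Problem \ref{inc0}, and the result follows from Theorem \ref{t:2}.'' Your additional verifications (maximal monotonicity of $\partial G$, the identity $J_{\gamma_n\partial G}=\prox_{\gamma_n G}$, the equivalence of $\nu$-strong convexity of $G$ with global $\nu$-strong monotonicity of $\partial G$, and the identification of solutions of \eqref{e:varine} with zeros of $\partial G+B$) are exactly the details the paper leaves implicit, and your remark on why the hypothesis on $G$ must be global while the one on $B$ may stay pointwise matches the paper's own comment preceding Assumption \ref{ass:strcon2}.
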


\begin{proof} Set $A = \partial\nsm$. Then Problem \ref{varine} reduces to a particular case 
of Problem \ref{inc0}. Hence, the result follows from Theorem \ref{t:2}.
\end{proof}

When $G$ is the indicator function of a non-empty, closed, convex subset $C$ of $\HH$, Problem \ref{varine}
reduces to the problem of solving a classic variational inequality \cite{Lion67,Martinet70}, namely to find 
$\overline{w}$ such that 
\begin{equation}\label{eq:varin}
 (\forall w\in C)\quad \scal{B \overline{w}}{\overline{w}-w} \leq 0.
\end{equation}
Proximal algorithms  are often used to solve this problem, see \cite[Chapter 25]{livre1} and references therein. When $B$ is accessible only through a stochastic
oracle, the available methods are limited. In \cite{CheWetZha12} a smoothing sample average approximation method
is analyzed when $B$ can be written as an expectation. An iterative Tikhonov regularization method, based on 
an iterative projected scheme is studied in \cite{KosNedSha13}. Recently, a stochastic mirror-prox algorithm has been proposed 
in \cite{Jud11} for the case when $C$ is a nonempty, compact, convex subset of $\RR^d$, and $B$ is only Lipschitz continuous. 
We also remark that in  \cite{JiaXu08} almost sure convergence of a forward-backward splitting algorithm with respect 
to a noneuclidean metric is studied in  a finite dimensional space, when $B$ is given as an expectation.  

Note that, by \cite[Lemma 1]{Bro65}, since cocoercivity of $B$ implies Lipschitz continuity,  $\overline{w}$ is a solution of \eqref{eq:varin} if and only if 
\begin{equation}
\label{eq:wsol}
(\forall w\in C)\quad \scal{Bw}{\overline{w}-w}\leq 0\,.
\end{equation}
As it has been done in \cite{Jud11}, it is therefore natural to quantify the inaccuracy of a candidate solution $u\in\HH$ by 
the merit function
\begin{equation}
V(u)=\sup_{w\in C} \scal{Bw}{u-w}.
\end{equation}
In particular, note that $(\forall u\in\HH)$ $V(u)\geq 0$ and $V(u)=0$ if and only $u$ is a solution of \eqref{eq:wsol}. We will consider  convergence properties of the following iteration, which differs from the one in Algorithm \ref{a:maininc} only by the averaging step. 

\begin{algorithm} 
\label{a:varine}
Let $C$ be a nonempty bounded closed convex subset of $\HH$.
Let $(\gamma_t)_{t\in\NN^*}$ be a  sequence in $]0,+\infty[$.
Let  $(\lambda_t)_{t\in\NN^*}$ be a sequence in $\left[0,1\right]$,
and let $(\SG_t)_{t\in\NN^{*}}$ be a $\HH$-valued random process
such that $(\forall n\in\NN^*)\; \E[\| \SG_n\|^2]  < +\infty$.
Let $w_1\colon\Omega\to\HH$ be a random variable such that $\E[\|w_1\|^2]<+\infty$ and set
\begin{equation}
\label{e:ex2}
(\forall n\in\NN^{*})\quad
\begin{array}{l}
\left\lfloor
\begin{array}{l}
\operatorname{For} \; t =1,\ldots,n\\
\begin{array}{l}
\left\lfloor
\begin{array}{l}
z_t = w_t- \gamma_t\SG_t  \\
y_t=P_{C}z_t\\
w_{t+1} = (1-\lambda_t)w_t+\lambda_t y_t \\
\end{array}
\right.\\[2mm]
\overline{w}_n = \big(\sum_{t=1}^{n}\gamma_t\lambda_t
w_t\big)/\sum_{t=1}^{n}(\gamma_t\lambda_t). 
\end{array}
\end{array}
\right.\\[2mm]
\end{array}\end{equation}
\end{algorithm}

The next theorem gives an estimate of the function $V$ when evaluated on the expectation of $\overline{w}_n$. Note 
that we do not impose any additional monotonicity property on $B$. 

\begin{theorem}{\rm (Ergodic convergence)} 
\label{t:7*}
In the setting of  problem \eqref{e:varine}, assume that  $\nsm = \iota_C$ 
for some nonempty bounded closed convex set $C$ in $\HH$.
Let $(\overline{w}_n)_{n\in\NN^{*}}$ 
be the sequence generated by Algorithm \ref{a:varine}
and suppose that  conditions $(A1)$, $(A2)$, and $(A3)$ hold.
Set
\begin{equation}
\theta_0 = \sup_{u\in C}\frac 12 \E[\|w_1-u\|^2]
\  \text{and}\  
 \theta_{1,n} =\frac 1 2   \sum_{t=1}^n\big(\lambda_t\gamma_{t}^2(1+ \sigma^2\alpha_t) \E[\|Bw_t \|^2] 
+ \sigma^2 \lambda_t\gamma_{t}^2\big),
\end{equation}
then 
\begin{equation}
\label{e:ergodic}
V( \E[\overline{w}_n]) \leq (\theta_0+\theta_{1,n}) \bigg(\sum_{t=1}^n \lambda_t\gamma_t\bigg)^{-1}. 
\end{equation}
Moreover,  suppose that the condition $(A4)$ is also satisfied. Then,  
\begin{equation}
\label{eq:mertoo}
\lim_{n\to +\infty}V(\E[\overline{w}_n])=0.
\end{equation}
In particular, if $(\forall t\in\NN^*)$ $\lambda_t=1$ and $\gamma_t=t^{-\theta}$ for some $\theta\in\,]1/2,1[$, we get
\begin{equation}
V(\E[\overline{w}_n]) =O(n^{\theta-1}).
\end{equation}
\end{theorem}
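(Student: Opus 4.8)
My plan is to establish the fundamental ergodic bound \eqref{e:ergodic} first, and then deduce the two limiting statements as easy consequences. The starting point is the firm nonexpansiveness of the projection $P_C$ (Lemma~\ref{l:firm} applied to $A=\partial\iota_C$). For each $t$ and each fixed $u\in C$, writing $y_t=P_C(w_t-\gamma_t\SG_t)$, I would expand $\|y_t-u\|^2$ using the characterization of the projection, obtaining a bound of the form
\begin{equation*}
\|y_t-u\|^2\le\|w_t-u\|^2-2\gamma_t\scal{\SG_t}{w_t-u}+\gamma_t^2\|\SG_t\|^2-\|w_t-y_t-\gamma_t\SG_t\|^2.
\end{equation*}
Combining this with the convexity bound $\|w_{t+1}-u\|^2\le(1-\lambda_t)\|w_t-u\|^2+\lambda_t\|y_t-u\|^2$ (as in the proof of Proposition~\ref{p:1}), I get a telescoping-ready recursion controlling $\|w_{t+1}-u\|^2$ in terms of $\|w_t-u\|^2$, the inner product $\lambda_t\gamma_t\scal{\SG_t}{w_t-u}$, and the error term $\lambda_t\gamma_t^2\|\SG_t\|^2$.

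The crux is to handle the inner product term. First I would use $(A1)$ to replace, in conditional expectation, $\SG_t$ by $Bw_t$, so that $\E[\scal{\SG_t}{w_t-u}\mid\mathcal F_t]=\scal{Bw_t}{w_t-u}$. Then I would split $\scal{Bw_t}{w_t-u}=\scal{Bw_t}{w_t-w}+\scal{Bw_t}{w-u}$ or, more directly, exploit monotonicity of $B$ to pass from $\scal{Bw_t}{w_t-w}$ to $\scal{Bw}{w_t-w}$: since $B$ is monotone, $\scal{Bw_t-Bw}{w_t-w}\ge0$, hence $\scal{Bw_t}{w_t-w}\ge\scal{Bw}{w_t-w}$ for any fixed $w\in C$. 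This is exactly the mechanism that lets the merit function $V$, built on $Bw$ rather than $Bw_t$, emerge. Taking total expectations, telescoping the recursion from $t=1$ to $n$, dropping the nonnegative $\E[\|w_{n+1}-u\|^2]$ and the squared-distance terms, and controlling $\E[\gamma_t^2\|\SG_t\|^2]$ via $(A2)$ (which gives $\E[\|\SG_t\|^2]\le\|Bw_t\|^2+\sigma^2(1+\alpha_t\|Bw_t\|^2)$, matching the definition of $\theta_{1,n}$), I obtain
\begin{equation*}
\Big(\sum_{t=1}^n\lambda_t\gamma_t\Big)\,\E\Big[\scal{Bw}{\overline{w}_n-w}\Big]\le\theta_0+\theta_{1,n}
\end{equation*}
for every $w\in C$, using the definition $\overline{w}_n=\big(\sum_t\gamma_t\lambda_tw_t\big)/\sum_t\gamma_t\lambda_t$ and linearity to recognize the weighted average inside the bracket. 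Since this holds uniformly in $w\in C$ and since $\scal{Bw}{\E[\overline{w}_n]-w}=\E[\scal{Bw}{\overline{w}_n-w}]$ by linearity of $B$ in the pairing against a deterministic $Bw$, taking the supremum over $w\in C$ yields \eqref{e:ergodic}.

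For the limiting statement \eqref{eq:mertoo}, I would argue that under $(A4)$ the denominator $\sum_t\lambda_t\gamma_t\to+\infty$, so it suffices to show $\theta_{1,n}=o\big(\sum_t\lambda_t\gamma_t\big)$. Boundedness of $C$ and continuity of $B$ give a uniform bound $\sup_{w\in C}\|Bw\|<\infty$; combined with the second summability condition in $(A4)$ (namely $\sum_t\chi_t^2<\infty$, i.e.\ $\sum_t\lambda_t\gamma_t^2(1+2\alpha_t\|B\overline{w}\|^2)<\infty$), this forces $\theta_{1,n}$ to stay bounded, while $\theta_0$ is a fixed finite constant by $\E[\|w_1\|^2]<\infty$ and boundedness of $C$. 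Hence the whole right-hand side tends to $0$. The final rate claim is a direct computation: with $\lambda_t\equiv1$ and $\gamma_t=t^{-\theta}$, $\theta\in\,]1/2,1[$, one has $\sum_{t=1}^n\gamma_t\sim n^{1-\theta}$ while $\sum_t\gamma_t^2=\sum_t t^{-2\theta}$ converges (since $2\theta>1$), so $\theta_{1,n}$ is bounded and the ratio is $O(n^{\theta-1})$. \textbf{The main obstacle} I anticipate is the careful bookkeeping in passing from $\scal{Bw_t}{w_t-w}$ to $\scal{Bw}{w_t-w}$ via monotonicity while keeping the error terms aligned with the precise definitions of $\theta_0$ and $\theta_{1,n}$ — in particular ensuring that the factor in front of $\E[\|Bw_t\|^2]$ is $(1+\sigma^2\alpha_t)$ and not something larger, which requires grouping the $\|Bw_t\|^2$ contribution from $\E[\|\SG_t\|^2]$ correctly against the $(A2)$ bound.
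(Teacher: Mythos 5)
Your derivation of the ergodic bound \eqref{e:ergodic} follows essentially the same route as the paper: nonexpansiveness of $P_C$ plus convexity of $\|\cdot\|^2$ gives the recursion, (A1) replaces $\SG_t$ by $Bw_t$ under conditional expectation, (A2) controls $\E[\|\SG_t\|^2]$ with exactly the $(1+\sigma^2\alpha_t)$ factor, monotonicity of $B$ passes from $\scal{w_t-u}{Bw_t}$ to $\scal{w_t-u}{Bu}$, and telescoping plus the supremum over $u\in C$ (with the exchange $\scal{Bu}{\E[\overline{w}_n]-u}=\E[\scal{Bu}{\overline{w}_n-u}]$) yields the claim. The only cosmetic difference is that you invoke firm nonexpansiveness where plain nonexpansiveness suffices, since the extra negative term is discarded anyway.

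The gap is in your proof of \eqref{eq:mertoo}. You bound $\theta_{1,n}$ by arguing that ``boundedness of $C$ and continuity of $B$ give $\sup_{w\in C}\|Bw\|<\infty$.'' There are two problems. First, in infinite dimensions continuity on a bounded closed set does not imply boundedness (closed bounded sets are not compact); what saves you is the Lipschitz continuity of $B$ that follows from cocoercivity. Second, and more importantly, this supremum does not control $\E[\|Bw_t\|^2]$, because the iterates $w_t$ need not lie in $C$: one has $y_t=P_Cz_t\in C$, but $w_{t+1}=(1-\lambda_t)w_t+\lambda_t y_t$ with $\lambda_t$ possibly strictly less than $1$, and $w_1$ is an arbitrary square-integrable random variable, so in general $w_t\notin C$. (Your argument would be fine only in the special case $\lambda_t\equiv 1$, where $w_t\in C$ for all $t\geq 2$.) The paper instead obtains boundedness of $(\E[\|Bw_t\|^2])_{t\in\NN^*}$ from Proposition~\ref{p:1}\ref{p:1i} --- applicable at this point since (A1)--(A4) are all assumed for the limiting statement --- combined with Lipschitz continuity of $B$. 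Alternatively, your idea can be repaired without Proposition~\ref{p:1}: by induction $w_t$ lies in the convex hull of $\{w_1\}\cup C$, hence $\|w_t\|\leq \|w_1\|+\sup_{c\in C}\|c\|$ a.s., and Lipschitz continuity of $B$ then gives a uniform bound on $\E[\|Bw_t\|^2]$. With either fix, the remainder of your limiting argument and the $O(n^{\theta-1})$ rate computation are correct.
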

\begin{proof} 
Since $C$ is a non-empty closed convex set, $P_C$ is non-expansive,  
and  for every $u\in C$, $u = P_Cu$. Hence, from the convexity of $\|\cdot\|^2$
\begin{alignat}{2}
\label{e:dissc1}
(\forall t\in\NN^*)(\forall u\in C)\quad 
\| w_{t+1} -u \|^{2} &\leq (1-\lambda_t) \|w_t-u\|^2+\lambda_t\|y_t-u\|^2\\
&= (1-\lambda_t) \|w_t-u\|^2+\lambda_t \|P_C(w_t-\gamma_t\SG_t) - P_Cu \|^{2}\notag\\
&\leq (1-\lambda_t) \|w_t-u\|^2+\lambda_t\| w_t  - u-\gamma_t\SG_t \|^{2}\notag\\
&\leq \| w_t-u\|^{2} -2\lambda_t\gamma_t \scal{w_t  - u }{\SG_t }
 + \lambda_t\gamma^{2}_t \|\SG_t \|^{2}.\notag
\end{alignat}
We derive from  conditions $(A1)$ that
\begin{equation}
\E[\scal{w_t-u }{\SG_t }| \mathcal{F}_t] =   
\scal{w_t-u }{ Bw_t}
\end{equation}
and from (A3) that
 \begin{alignat}{2}
 \E[\|\SG_t \|^2| \mathcal{F}_t] 
&\leq \E[\|\SG_t - Bw_t \|^2| \mathcal{F}_t]
+  \E[\|Bw_t\|^2 |\mathcal{F}_t  ]+ 2\E[\scal{\SG_t-Bw_t}{Bw_t}|\mathcal{F}_t] \notag\\
&\leq \|Bw_t \|^2 + \sigma^2(1+\alpha_t\|Bw_t\|^2).
 \end{alignat}
Therefore, \eqref{e:dissc1} and the monotonicity of $B$ yield
\begin{alignat}{1}
2\lambda_t\gamma_t \scal{w_t-u }{Bu} &\leq \|w_{t} -u \|^{2}-
 \E[\| w_{t+1} -u \|^{2} | \mathcal{F}_t] 
\hfill + \lambda_t\gamma_{t}^2(1+ \sigma^2\alpha_t)\|Bw_t\|^2) +\sigma^2\lambda_t\gamma_{t}^2,
\end{alignat}
which implies that 
\begin{alignat}{2}
 2\E[\scal{\overline{w}_n-u}{Bu}]& \leq \bigg(\sum_{t=1}^n \lambda_t\gamma_t\bigg)^{-1} 
\sum_{t=1}^n \bigg(\E[\|w_{t} -u \|^{2}]
 - \E[\| w_{t+1} -u \|^{2} ]  \notag\\
&\quad+\hfil \lambda_t\gamma_{t}^2(1+ \sigma^2\alpha_t) \E[\|Bw_t \|^2] 
+ \sigma^2\lambda_t\gamma_{t}^2)  \bigg)\notag\\
&\leq 
\bigg(\sum_{t=1}^n\lambda_t\gamma_t\bigg)^{-1}\bigg( \E[\|w_{1} -u \|^{2}]+   \sum_{t=1}^n\bigg( \lambda_t\gamma_{t}^2(1+ \sigma^2\lambda_t\alpha_t) \E[\|Bw_t \|^2] 
+ \sigma^2\lambda_t\gamma_{t}^2) \bigg) \bigg).
\end{alignat}
Therefore,
\begin{equation} 
\sup_{u\in C} \E[\scal{\overline{w}_n-u}{Bu}] \leq (\theta_0+\theta_{1,n}) \bigg(\sum_{t=1}^n \lambda_t\gamma_t\bigg)^{-1}, 
\end{equation}
which proves \eqref{e:ergodic}.
Finally, since $C$ is bounded, $\theta_0<+\infty$. 
Now, additionally assume that $(A4)$  is satisfied. 
Then $\sum_{t=1}^{+\infty} \lambda_t\gamma_t=+\infty$, therefore, 
to get \eqref{eq:mertoo}, it is enough to prove that 
$(\theta_0+\theta_{1,n})_{n\in\NN^*}$ is bounded. 
Since we derive from $(A4)$ that 
$\sum_{t=1}^{+\infty} \lambda_t\gamma_t^2<+\infty$ and 
$\sum_{t=1}^{+\infty} \lambda_t\gamma_t^2\alpha_t <+\infty$, 
we are left to prove that $(\E[\|Bw_t\|^2])_{t\in\NN^*}$ is bounded. 
This directly follows from Proposition \ref{p:1}\ref{p:1i}. 
The last assertion of the statement follows from \eqref{e:ergodic} when evaluated for 
$(\forall t\in\NN^*)\;\gamma_t=t^{-\theta}$ and $\lambda_t=1$.
\end{proof}

\begin{remark}
As we mentioned before, when $\dim \HH$ is finite,  $C$ is  a non-empty convex, compact 
subset of $\HH$, and $B$ is bounded, an alternative method to solve Problem \ref{varine} can be found in \cite{Jud11}, 
where $\alpha_n=0$ and the assumption of cocoercivity of $B$ is replaced by the weaker Lipschitz continuity assumption. 
Note that, with respect to forward-backward, the mirror-prox algorithm proposed in \cite{Jud11}, 
requires two projections per iteration, rather than one. With such procedure, in \cite{Jud11}  it is proved that
$\E[V(\overline{w}_n)]$ goes to zero. Note that in general $V(\E[\overline{w}_n])\leq \E[V(\overline{w}_n)]$. 
\end{remark}

\subsection{Minimization problems}
In this section, we specialize  the results in Section \ref{sec:main} to  minimization problems. 
In the special case of composite minimization, stochastic implementations of forward-backward 
splitting algorithms, and more generally of first order methods, received much attention and have been recently 
studied in several papers \cite{Duchi09,AtcForMou14,De11,Lan09,LinChePen14,DucAgaJoh12} for the ease of implement and 
 the low  memory requirement of each iteration.
In particular, \cite{Lan09} proposes an accelerated method and derives  a rate of convergence for  the objective function 
values which is optimal both with respect to the smooth component and  the non-smooth term. Similar accelerated
proximal gradient algorithms have been also studied in the machine learning community, see  \cite{Hu,Xiao},
and  also \cite{bottou2005line,Salev07,Shalev08,zhang2008multi}. 

\begin{problem}
\label{proA2}
Let $\beta\in\,]0,+\infty[$, 
let $\nsm\in\Gamma_0(\HH)$,
and let $\smo\colon\HH\to\RR $ be a convex differentiable function,  
with a $\beta^{-1}$-Lipschitz continuous gradient.
The problem is to
\begin{equation}
\label{e:proA2}
  \underset{w\in\HH}{\text{minimize}}\; \obj(w)=\smo(w)+\nsm(w),
\end{equation}
under the assumption that the set of solution to \eqref{e:proA2} is non-empty.
\end{problem} 

In the setting of Problem \ref{proA2}, let $(\forall n\in\mathbb{N}^*)$ $\gamma_n=c_1n^{-\theta}$ for some  $\theta \in \left]0,1\right]$ and for some  $c_1\in\,]0,+\infty[$. 
Let $(\lambda_n)_{n\in\NN^*}$ be a sequence in $\left]0,1\right]$. 
Let  $(\SG_n)_{n\in\NN^*}$ a random process taking values in $\HH$ such that 
$(\forall n\in\NN^*)\; \E[\| \SG_n\|^2]  < +\infty$,
and let $w_1$ a random variable in $\HH$ such that $\E[\|w_1\|^2]<+\infty$. Define
\begin{equation}
\label{e:ex1}
(\forall n\in\NN^*)\quad
\begin{array}{l}
\left\lfloor
\begin{array}{l}
z_n = w_n- \gamma_n \SG_n\\
y_{n} = \prox_{\gamma_n G}z_n\\
w_{n+1} = (1-\lambda_n) w_{n} + \lambda_ny_{n}.\\
\end{array}
\right.\\[2mm]
\end{array}
\end{equation}

The following results are instances of Corollaries \ref{c:1mezzo} and \ref{c:1},
corresponding to the case  $B=\nabla F$.

\begin{corollary} \label{cor:1}
Let $\underline{\lambda}\in\left]0,1\right]$ and $\overline{\alpha}\in\left]0,+\infty\right[$.
Suppose that  conditions  (A1), (A2) and (A3) are satisfied with $B = \nabla\smo$
and $\sup_{n\in\mathbb{N}^*}\alpha_n \leq \bar{\alpha}$. 
Let $\overline{w}$ be a solution of Problem \ref{proA2}. Suppose that $G$ is $\nu$-strongly 
convex and $\smo$ is $\mu$-strongly convex at $\overline{w}$ for some 
$(\nu,\mu) \in \left[0,+\infty\right[^2$ such that $\nu+\mu > 0$.  
Set $t = 1-2^{\theta-1}\geq 0$, 
$c={c_1\underline{\lambda}(\nu+\mu\varepsilon)}/{(1+\nu)^2} $, 
$\tau={2\sigma^2c_1^2 (1+\overline{\alpha} \|\nabla \smo(\overline{w})\| )/c^2 }$ 
and let $n_0$ be the smallest integer such that $(\forall n\geq n_0>1)\; \max\{c,c_1\} n^{-\theta}\leq 1.$
Let $(w_n)_{n\in\NN^*}$ be the sequence generated by Algorithm \ref{e:ex1} and define, for very $n\in\NN^*$, 
$s_n=\E[\|w_n-\overline{w}\|^2]$. 
Then,
\begin{enumerate}
\item
If $\theta\in]0,1[$, then \eqref{eq:Est1} holds.
\item  If $\theta =1$, then \eqref{eq:Est11} holds.
\item  $(s_n)_{n\in\NN^*}$ satisfies \eqref{eq:Est111}.
\end{enumerate}
\end{corollary}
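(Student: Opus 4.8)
The plan is to recognize Problem \ref{proA2} as the special case of Problem \ref{inc0} in which $A = \partial G$ and $B = \nabla\smo$, so that the recursion \eqref{e:ex1} is precisely Algorithm \ref{a:maininc} and the three claims follow by specializing Theorem \ref{t:2} (equivalently Corollary \ref{c:1}). The substantive part is purely a translation between the convex-analytic hypotheses stated here and the monotonicity hypotheses required there; no new estimate is needed.

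First I would set up the dictionary between the two formulations. Since $G \in \Gamma_0(\HH)$, the operator $A = \partial G$ is maximally monotone and its resolvent is the proximity operator, $J_{\gamma_n A} = \prox_{\gamma_n G}$, so \eqref{e:ex1} and \eqref{e:main1*} are the same iteration. For the forward operator, the Baillon--Haddad theorem shows that convexity of $\smo$ together with $\beta^{-1}$-Lipschitz continuity of $\nabla\smo$ is equivalent to $\beta$-cocoercivity of $\nabla\smo$; hence $B = \nabla\smo$ meets the standing requirement of Problem \ref{inc0}. Finally, Fermat's rule gives that a minimizer $\overline{w}$ of $\smo + G$ satisfies $0 \in \partial G(\overline{w}) + \nabla\smo(\overline{w})$, so $\overline{w} \in \zer(A+B)$.

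Next I would transfer the strong-convexity assumptions. If $G$ is $\nu$-strongly convex then $G - (\nu/2)\|\cdot\|^2 \in \Gamma_0(\HH)$, whence $\partial G - \nu\Id$ is monotone and $A = \partial G$ is $\nu$-strongly monotone in the sense of Section \ref{sec:pre}. Similarly, $\mu$-strong convexity of $\smo$ at $\overline{w}$ yields the pointwise inequality $\scal{\nabla\smo(w) - \nabla\smo(\overline{w})}{w - \overline{w}} \geq \mu\|w - \overline{w}\|^2$, i.e. $B = \nabla\smo$ is $\mu$-strongly monotone at $\overline{w}$. With $\nu + \mu > 0$ this is exactly Assumption \ref{ass:strcon}, while (A1)--(A3), $\sup_n \alpha_n \leq \overline{\alpha}$ and $\inf_n \lambda_n \geq \underline{\lambda}$ hold by hypothesis. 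Theorem \ref{t:2} then applies and returns \eqref{eq:Est1}, \eqref{eq:Est11} and \eqref{eq:Est111}, with $\|B\overline{w}\|$ read as $\|\nabla\smo(\overline{w})\|$ in the constant $\tau$.

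The one point needing care, and the one I expect to be the main obstacle, is reconciling the contraction constant $c$. The recursion \eqref{e:consq2} in the proof of Theorem \ref{t:2} produces $c = c_1\underline{\lambda}(2\nu + \mu\varepsilon)/(1+\nu)^2$ from the $(1+\gamma_n\nu)$-cocoercivity of the resolvent used in \eqref{eq:l1}, and Corollary \ref{c:1} records the same value, whereas the present statement writes $c = c_1\underline{\lambda}(\nu + \mu\varepsilon)/(1+\nu)^2$. I would re-derive \eqref{eq:l1} in this setting to decide whether the smaller coefficient is genuine or a typographical oversight. In any event the conclusion is robust: the explicit bounds hold with whichever $c$ the recursion produces, and the asymptotic rates \eqref{eq:Est111} depend on $c$ only through the comparison of $c$ with $1$ when $\theta = 1$, and not at all when $\theta \in \left]0,1\right[$, so each of the three items then follows at once.
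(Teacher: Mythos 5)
Your proof is correct and follows essentially the same route as the paper: the paper's own proof of Corollary \ref{cor:1} is exactly the reduction you describe---set $A=\partial\nsm$, $B=\nabla\smo$, invoke the Baillon--Haddad theorem for cocoercivity and the identity $J_{\gamma_n A}=\prox_{\gamma_n \nsm}$, and then apply Corollary \ref{c:1} (hence Theorem \ref{t:2}), with your extra details (Fermat's rule and the passage from strong convexity to strong monotonicity) being precisely what the paper leaves implicit. You are also right to flag the constant: the statement's $c=c_1\underline{\lambda}(\nu+\mu\varepsilon)/(1+\nu)^2$ is inconsistent with the $c_1\underline{\lambda}(2\nu+\mu\varepsilon)/(1+\nu)^2$ produced by the recursion in Theorem \ref{t:2} and recorded in Corollary \ref{c:1}; this is a typographical slip in the paper and, as you observe, it does not affect the validity of the three conclusions.
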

\begin{proof}
Set $B = \nabla \smo$ and $A = \partial\nsm$. The results follow from Corollary \ref{c:1},
the Baillon-Haddad Theorem \cite[Corollary 18.16]{livre1} and the fact that 
for every $n\in\NN^*$, $J_{\gamma_n A} = \prox_{\gamma_n G}$. 
\end{proof}

In the case when $\nsm $ is the indicator function of a non empty closed convex set 
and $(\forall n\in\NN^*)\; \lambda_n =1$, a similar result on the rate of convergence of
$\E[\|w_n-\overline{w}\|^2]$ has been obtained in \cite{yousefian2012stochastic}, under 
similar assumptions to $(A1),\dots,(A4)$
for the case where $\smo$ is strongly 
convex, under the additional assumption of 
boundedness of the conditional expectations of $(\|\SG_n-\nabla\smo(w_n)\|^2)_{n\in\NN^*}$. 

Corollary \ref{cor:1} is the extension to the nonsmooth case of \cite[Theorem 1]{bach}, in particular,  when $\nsm=0$,
we obtain the same convergence rate. Note however that the assumptions
on the stochastic approximations of the gradient of the smooth part are slightly different.
Algorithm~\ref{e:ex1} is closely related to the FOBOS algorithm studied in \cite{Duchi09}
and the stochastic proximal gradient algorithm in \cite{AtcForMou14}.
The main difference is that these papers consider convergence of the average of the iterates. 
Also  uniform boundedness of the iterations and the subdifferentials are required. 
Our convergence results consider convergence of the iterates with no averaging,
without boundedness assumptions. This is relevant for sparsity based regularization,
where averaging can have a detrimental effect. The asymptotic rate $O(n^{-1})$ 
which we obtain  for the iterates improves  the $O((\log n)/n)$ rate derived from \cite[Corollary 10]{Duchi09} 
for  the average of the iterates and it coincides with the one that can be derived by applying optimal
methods \cite{Lan09}.

\begin{corollary}\label{cor:1mezzo}
Suppose that  conditions  (A1), (A2), (A3) and (A4) are satisfied with $B = \nabla\smo$ and
let $(w_n)_{n\in\NN^*}$ be the sequence generated by Algorithm \ref{e:ex1}.  Then  
\begin{enumerate}
\item\label{c2:2nvii} 
If $\smo$ is uniformly convex at $\overline{w}$, 
then $w_n \to \overline{w}$ a.s.
\item \label{c2:2nviii}
If $ \smo$  is strictly convex at $\overline{w}$ and $\nabla \smo$ is  weakly continuous, 
then there exists a subsequence $(w_{t_n})_{n\in\NN^{*}}$ such that
 $w_{t_n}\weakly\overline{w}$ a.s.
\item \label{c2:2nviv}
 If $\smo$ is strictly convex at $\overline{w}$, 
and either $\HH$ is finite dimensional space or $\smo$ is a bounded and linear, 
 then there exists $\Omega_1\in\boldsymbol{\mathcal{A}}$ with $\boldsymbol{\mathcal{P}}(\Omega_1)=1$
 such that, for every $\omega\in\Omega_1$, there exists a subsequence $(w_{t_n})_{n\in\NN^{*}}$ such that
 $w_{t_n}(\omega)\weakly\overline{w}$.
\end{enumerate}               
\end{corollary}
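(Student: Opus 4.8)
The plan is to read Corollary \ref{cor:1mezzo} as the specialization of Corollary \ref{c:1mezzo} to the choice $A = \partial\nsm$ and $B = \nabla\smo$, so that almost all of the content is already contained in the monotone-inclusion theory and the only real task is to translate the convexity hypotheses on $\smo$ into the monotonicity hypotheses on $B$ demanded by Corollary \ref{c:1mezzo}. First I would verify that this substitution lands us inside the setting of Problem \ref{varine} and the recursion \eqref{e:ex11}. Since $\smo$ is convex with $\beta^{-1}$-Lipschitz continuous gradient, the Baillon--Haddad theorem \cite[Corollary 18.16]{livre1} gives that $B = \nabla\smo$ is $\beta$-cocoercive; because $\smo$ is finite and continuous on $\HH$, Fermat's rule together with the subdifferential sum rule yields $\partial\obj = \partial\nsm + \nabla\smo$, so the minimizers of \eqref{e:proA2} are exactly the points of $\zer(\partial\nsm + \nabla\smo)$, which is nonempty by hypothesis. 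Finally $J_{\gamma_n\partial\nsm} = \prox_{\gamma_n\nsm}$, so \eqref{e:ex1} coincides with \eqref{e:ex11}, and (A1)--(A4) are assumed to hold for this $B$.

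The one genuinely new ingredient is the dictionary between convexity of $\smo$ at $\overline{w}$ and monotonicity of $\nabla\smo$ at $\overline{w}$, which I would establish by elementary gradient/restriction arguments (these are also standard, see \cite{livre1}). For uniform convexity: pointwise uniform convexity at $\overline{w}$ with modulus $\phi$ gives the enhanced inequality $\smo(w) \geq \smo(\overline{w}) + \scal{\nabla\smo(\overline{w})}{w-\overline{w}} + \phi(\|w-\overline{w}\|)$ (let $\lambda\to 0^+$ along the segment joining $\overline{w}$ to $w$), and adding to it the ordinary convexity inequality $\smo(\overline{w}) \geq \smo(w) + \scal{\nabla\smo(w)}{\overline{w}-w}$ produces
\begin{equation*}
\scal{w-\overline{w}}{\nabla\smo(w)-\nabla\smo(\overline{w})} \geq \phi(\|w-\overline{w}\|),
\end{equation*}
which is exactly uniform monotonicity of $B$ at $\overline{w}$ in the sense of \eqref{oioi}. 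For strict convexity: restricting to $g(t) = \smo(\overline{w} + t(w-\overline{w}))$ for $w\neq\overline{w}$, strict convexity at $\overline{w}$ forces $g$ to lie strictly below its chord, and since $g$ is convex this is incompatible with $g'(0) = g'(1)$; hence $g'(1) - g'(0) = \scal{w-\overline{w}}{\nabla\smo(w)-\nabla\smo(\overline{w})} > 0$, i.e.\ $B$ is strictly monotone at $\overline{w}$.

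With the dictionary in place the three claims are immediate. For \ref{c2:2nvii}, uniform convexity of $\smo$ at $\overline{w}$ gives uniform monotonicity of $B$ at $\overline{w}$, so Corollary \ref{c:1mezzo}\ref{c:2nvii} yields $w_n\to\overline{w}$ a.s. For \ref{c2:2nviii}, strict convexity of $\smo$ at $\overline{w}$ gives strict monotonicity of $B$ at $\overline{w}$, and together with the assumed weak continuity of $\nabla\smo$, Corollary \ref{c:1mezzo}\ref{c:2nviii} produces the a.s.\ weakly convergent subsequence. For \ref{c2:2nviv}, strict convexity again gives strict monotonicity, while either finite dimensionality of $\HH$ (where weak and strong topologies coincide) or $\nabla\smo$ bounded and linear (hence weakly continuous) supplies the remaining weak-continuity requirement, so Corollary \ref{c:1mezzo}\ref{c:2nviv} applies. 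The only point requiring any care is the dictionary above: one must make sure the \emph{pointwise-at-$\overline{w}$} notions line up exactly with the definitions in \eqref{oioi} and its strict counterpart rather than the global ones, so that the derivation of monotonicity uses the enhanced inequality only at the base point $\overline{w}$ and relies on plain convexity everywhere else.
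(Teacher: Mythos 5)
Your proposal is correct and follows essentially the same route as the paper: set $A=\partial\nsm$, $B=\nabla\smo$, invoke the Baillon--Haddad theorem for $\beta$-cocoercivity and the identity $J_{\gamma_n A}=\prox_{\gamma_n\nsm}$, and then apply Corollary~\ref{c:1mezzo} (equivalently Theorem~\ref{t:2nv}) part by part. The only difference is that you explicitly verify the translation from (pointwise) uniform/strict convexity of $\smo$ at $\overline{w}$ to uniform/strict monotonicity of $\nabla\smo$ at $\overline{w}$, a standard step the paper leaves implicit; your verification of it is correct.
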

\begin{proof}
 Set $B = \nabla \smo$ and $A = \partial\nsm$. The results follow from Corollary \ref{c:1mezzo},
the Baillon-Haddad Theorem \cite[Corollary 18.16]{livre1}, and the fact that for every $n\in\NN^*$,
$J_{\gamma_n A} = \prox_{\gamma_n G}$. 
\end{proof}

In the optimization setting, the study of  almost sure convergence has a long history, see e.g. 
 \cite{Rob71,Clark78,Ben90,Chen02}  and  references therein. Recent results on 
almost sure convergence of projected stochastic gradient algorithm can be found in 
\cite{Bennar07,Monnez06}, under rather technical assumptions. Our results are  generalizations   
 of the analysis of the stochastic projected subgradient algorithm in \cite{Barty07}. 

\subsection{Minimization over orthonormal bases} \label{a:2}

We next  describe how to apply Algorithm \ref{e:ex1} to minimization over orthonormal bases. 
This  problem often arises in sparse signal recovery as well as 
learning theory (see e.g. \cite{siam07,Tib96}).
 \begin{problem} 
\label{ex:1} 
Let $\beta$ be in $\left]0,+\infty\right[$, let $\nu$ be in $\left[0,+\infty\right]$,   
assume that $\HH$ is separable, and let $(e_k)_{k\in\NN}$ be an orthonormal base of $\HH$.
Let $(\phi_k)_{k\in\NN}$ be a sequence of functions in $\Gamma_0(\RR_{+})$ such that 
$(\forall k\in\NN)\; \phi_k \geq \phi_k(0)=0$ and set 
\begin{equation}
\label{e:G}
G\colon\HH\to\left[0,+\infty\right]\colon w\mapsto \sum_{k\in\NN}\big( \phi_k(\scal{w}{e_k}) + \frac{\nu}{2} |\scal{w}{e_k}|^2 \big).
\end{equation}
Let $\smo$ be in $\Gamma_0(\HH)$
such that $\smo$ is differentiable, $\mu$ strongly convex for some $\mu\in\left[0,+\infty\right[$,
with a $\beta^{-1}$-Lipschitz continuous gradient.  
The problem is to 
\begin{equation}
\label{e:prob3}
 \underset{w\in\HH}{\text{minimize}} \;
\smo(w) + G(w)
\end{equation}
In the case when $\nu = 0$ and $\smo$ is non-strongly convex,
 we assume that the set of solutions to \eqref{e:prob3} 
is non-empty.
\end{problem}
\begin{corollary}Let $(\SG_n)_{n\in\NN^{*}}$ be a $\HH$-valued random process such that, for
every $n\in\NN^*$, $\E[\| \SG_n\|^2]  < +\infty$,
let $w_1\colon\Omega\to \HH$ be a random variable such that $\E[\|w_1\|^2]<+\infty$, and set
\begin{equation}
\label{e:mainex1}
(\forall n\in\NN^{*})\quad
\begin{array}{l}
\operatorname{For}\;k=0,1,\ldots,\\
\left\lfloor
\begin{array}{l}
z_{n,k} = w_{n,k}- \gamma_n \scal{\mathfrak{B}_n}{e_k}\\
y_{n,k} = \prox_{\frac{\gamma_n}{1+\nu\gamma_n}\phi_k}\big(z_{n,k}/(1+\nu\gamma_n)\big)\\
w_{n+1,k} =  (1-\lambda_n)w_{n,k} + \lambda_ny_{n,k},\\
\end{array}
\right.\\[2mm]
\end{array}
\end{equation}
Suppose that conditions (A1), (A2), and  (A3)  are satisfied.
Then  the following hold.
\begin{enumerate}
\item Under the assumptions of Corollary \ref{cor:1},
 $\overline{w}$ is unique. In addition, setting, for every $n\in\NN^*$, $s_n=\E[\|w_n-\overline{w}\|^2]$,
 if $\theta\in]0,1[$, then \eqref{eq:Est1} holds, and if $\theta =1$  then \eqref{eq:Est11} holds. Moreover, 
\eqref{eq:Est111} holds.
\item Assume that $\smo$ is uniformly convex and (A4) is satisfied. Then $w_n \to \overline{w}$ a.s.
\item Assume that  $\smo$ is strictly convex and $\nabla \smo$ is weakly continuous, and (A4) is satisfied.
Then there exists a subsequence $(t_n)_{n\in\NN}$ such that $w_{t_n}\weakly \overline{w}$ a.s.
\end{enumerate} 
\end{corollary}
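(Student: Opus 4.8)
The plan is to recognize the coordinatewise recursion \eqref{e:mainex1} as Algorithm \eqref{e:ex1} applied to Problem \ref{proA2} with smooth part $\smo$ and nonsmooth part $G$, and then to read off all three assertions from Corollaries \ref{cor:1} and \ref{cor:1mezzo}. The work therefore splits into a modeling step (showing $G\in\Gamma_0(\HH)$ and identifying its monotonicity) and a computational step (showing that the inner update is exactly $\prox_{\gamma_nG}$ evaluated coordinatewise).

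First I would confirm that $G\in\Gamma_0(\HH)$ and that it is $\nu$-strongly convex. Using $\sum_k|\scal{w}{e_k}|^2=\|w\|^2$, one writes $G=\sum_{k\in\NN}\phi_k\circ\scal{\cdot}{e_k}+\tfrac{\nu}{2}\|\cdot\|^2$, where each summand is nonnegative, convex and lower semicontinuous because $\phi_k\in\Gamma_0(\RR_+)$ with $\phi_k\geq\phi_k(0)=0$. Properness comes from $G(0)=0$, and lower semicontinuity of the infinite sum from its being a supremum of the finite partial sums. Subtracting $\tfrac{\nu}{2}\|\cdot\|^2$ leaves a convex function, so $G$ is $\nu$-strongly convex, which is exactly the hypothesis Corollary \ref{cor:1} imposes on the nonsmooth term. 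The smooth part is handled by the Baillon--Haddad theorem \cite[Corollary 18.16]{livre1}, which gives that $B=\nabla\smo$ is $\beta$-cocoercive, so that setting $A=\partial G$ places the problem within Problem \ref{inc0}.

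The crux will be verifying that the inner update computes $y_n=\prox_{\gamma_nG}z_n$ with $z_n=w_n-\gamma_n\SG_n$, coordinatewise. Since $G$ is separable along the orthonormal basis, the minimization defining $\prox_{\gamma_nG}z_n$ splits into the scalar problems of minimizing $\phi_k(\xi)+\tfrac{\nu}{2}\xi^2+\tfrac{1}{2\gamma_n}(\xi-z_{n,k})^2$, where $z_{n,k}=w_{n,k}-\gamma_n\scal{\SG_n}{e_k}$. Completing the square in the two quadratic terms rewrites each objective, up to an additive constant, as $\phi_k(\xi)+\tfrac{1+\nu\gamma_n}{2\gamma_n}\big(\xi-z_{n,k}/(1+\nu\gamma_n)\big)^2$, whose unique minimizer is $\prox_{\frac{\gamma_n}{1+\nu\gamma_n}\phi_k}\big(z_{n,k}/(1+\nu\gamma_n)\big)=y_{n,k}$. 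This is the one genuinely computational step, and where I expect any subtlety to lie: namely in justifying the separation of the proximity operator over the infinite basis and in tracking the $(1+\nu\gamma_n)^{-1}$ factors produced by the strongly convex quadratic. Once this is in place, the convex-combination step reproduces $w_{n+1}=(1-\lambda_n)w_n+\lambda_ny_n$ coordinatewise, so \eqref{e:mainex1} coincides with \eqref{e:ex1}.

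With the identification established, the three conclusions follow directly. For (i), since $\nu+\mu>0$ the operator $A+B=\partial(\smo+G)$ is strongly monotone, so $\overline{w}$ is the unique solution, and the bounds \eqref{eq:Est1}, \eqref{eq:Est11}, \eqref{eq:Est111} transfer verbatim from Corollary \ref{cor:1}. For (ii) and (iii) I would invoke Corollary \ref{cor:1mezzo}\ref{c2:2nvii} and \ref{cor:1mezzo}\ref{c2:2nviii}, respectively, after noting that uniform (respectively strict) convexity of $\smo$ at $\overline{w}$ is precisely uniform (respectively strict) monotonicity of $B=\nabla\smo$ there; the almost sure convergence statements then carry over unchanged. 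Everything outside the prox computation is thus bookkeeping to match the hypotheses of the already-proven corollaries.
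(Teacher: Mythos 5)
Your proposal is correct and follows essentially the same route as the paper: identify \eqref{e:mainex1} as the coordinatewise form of Algorithm \eqref{e:ex1} (using Parseval's identity for the $\nu$-strong convexity of $G$ and the separability of $\prox_{\gamma_n G}$ along the basis), then invoke Corollaries \ref{cor:1} and \ref{cor:1mezzo}. The only cosmetic difference is that you verify the prox formula by completing the square, whereas the paper cites \cite[Proposition 23.29(i) and Proposition 23.34]{livre1} for the same identity.
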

\begin{proof}
$G$ is $\nu$-strongly convex  by Parserval's identity, and 
 its proximity operator is computable \cite{livre1}. More precisely,  it follows from 
\cite[Proposition 23.29(i) and Proposition 23.34]{livre1} that the iteration \eqref{e:ex1} reduces 
to \eqref{e:mainex1}. Therefore, the statement follows from Corollaries
\ref{c:1mezzo} and  \ref{c:1}.
\end{proof}

\smallskip
\small{{\bf Acknowledgements}  This material is based upon work supported by the Center for Brains, Minds and Machines (CBMM), funded by NSF STC award CCF-1231216.
L. R. acknowledges the financial support of the Italian Ministry of Education, University and Research FIRB project RBFR12M3AC.
S. V. is member of the Gruppo Nazionale per
l'Analisi Matematica, la Probabilit\`a e le loro Applicazioni (GNAMPA)
of the Istituto Nazionale di Alta Matematica (INdAM).}
\def\cprime{$'$}



\end{document}